\newtheorem{theorem}{Theorem}
\newtheorem{lemma}[theorem]{Lemma}
\newtheorem{corollary}[theorem]{Corollary}
\newtheorem{proposition}[theorem]{Proposition}
\def\se{{\, \subseteq \, }}
\title[cohomology of arithmetic groups]{An infinitely generated virtual cohomology group for noncocompact arithmetic groups over function fields}
\thanks{The author gratefully acknowledges the support of the National Science Foundation.}
\author{Kevin Wortman}
\begin{document}

\begin{abstract} 
Let ${\bf G}(\mathcal{O}_S)$ be a noncocompact irreducible arithmetic group over a global function field $K$ of characteristic $p$, and let $\Gamma$ be a finite-index, residually $p$-finite subgroup of ${\bf G}(\mathcal{O}_S)$.
We show that the cohomology of $\Gamma$ in the dimension of its associated Euclidean building with coefficients in the field of $p$ elements is infinite.
\end{abstract}

\maketitle

Let $K$ be a global function field that contains the field with $p$ elements, $\mathbb{F}_p$. We let $S$ be a finite nonempty set of inequivalent valuations of $K$.
The ring $\mathcal{O}_S \subseteq K$ will denote  the corresponding ring of $S$-integers.
For any $v \in S$, we let $K_v$ be the completion of $K$ with respect to $v$ so that $K_v$ is a locally compact field. 

We denote by $\bf G$ a connected noncommutative absolutely almost simple $K$-group, and we let
$$k({\bf G},S)=\sum_{v\in S}\text{rank}_{K_v}\bf G$$ so that $k({\bf G},S)$ is the dimension of the Euclidean building on which the arithmetic group ${\bf G}(\mathcal{O}_S)$ acts as a lattice. Thus for example, $k({\bf SL_n},S)=|S|(n-1)$.

If $\bf G$ is $K$-anisotropic, then ${\bf G}(\mathcal{O}_S)$ contains a torsion-free finite-index subgroup that acts freely and cocompactly on a Euclidean building of dimension $k({\bf G},S)$. 
Determining the finiteness properties of arithmetic groups ${\bf G}(\mathcal{O}_S)$ in the case that $\bf G$ is $K$-isotropic has been more difficult. The model for the $K$-isotropic case was provided by the following theorem of Stuhler \cite{St}.

\begin{theorem}\label{t:st}
The arithmetic group ${\bf SL_2}(\mathcal{O}_S)$ is of type $F_{k({\bf SL_2},S)-1}$, and if $\Gamma$ is  any finite-index subgroup of ${\bf SL_2}(\mathcal{O}_S)$ whose only torsion elements are $p$-elements, then
${\rm H}^{ k({\bf SL_2},S)}(\, \Gamma \,;\, \mathbb{F}_p\,)$ is infinite.
\end{theorem}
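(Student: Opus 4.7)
The plan is to exploit $\Gamma$'s proper action on the contractible $k$-dimensional CAT(0) polysimplicial complex $X = \prod_{v\in S} T_v$, where $T_v$ is the Bruhat--Tits tree of $\mathbf{SL_2}(K_v)$ and $k := k(\mathbf{SL_2}, S) = |S|$. The first step, via Harder's reduction theory for function fields, is to produce a $\Gamma$-cocompact, $\Gamma$-invariant subcomplex $Y \subseteq X$ whose complement is a disjoint union of open horoballs $\{B_\alpha\}$ based at the $K$-rational cusps. Since $\mathbf{SL_2}$ has a single class of proper $K$-parabolics, $\Gamma$ permutes the horoballs with finitely many orbits, and each cusp stabilizer $\Gamma_\alpha$ is commensurable with the unipotent radical $\mathbf{U}(\mathcal{O}_S) \cong (\mathcal{O}_S, +)$. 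Crucially, $\mathcal{O}_S$ is a countably infinite-dimensional $\mathbb{F}_p$-vector space, so $\Gamma_\alpha$ is an infinite elementary abelian $p$-group whose $\mathbb{F}_p$-group cohomology is infinite-dimensional in every positive degree.

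For type $F_{k-1}$, I would exhaust $Y$ by $\Gamma$-cocompact subcomplexes obtained by pushing the horoballs inward along their defining Busemann functions and verify $(k-1)$-connectivity via descending-link Morse theory. For $\mathbf{SL_2}$, each descending link degenerates to a $k$-fold join of $0$-spheres, homotopy equivalent to $S^{k-1}$ and hence $(k-2)$-connected; Brown's criterion then gives type $F_{k-1}$.

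For the infinite-cohomology claim, use $H^*(\Gamma; \mathbb{F}_p) \cong H^*_\Gamma(X; \mathbb{F}_p)$ (valid since $X$ is contractible and $\Gamma$-proper) together with the $\Gamma$-equivariant Mayer--Vietoris sequence for the cover $X = Y \cup \bigsqcup_\alpha B_\alpha$. Each horoball $B_\alpha$ is contractible with stabilizer commensurable with $\mathbf{U}(\mathcal{O}_S)$, so its $\Gamma$-equivariant cohomology (summed over orbits) reduces to a direct sum of infinite-dimensional group-cohomology terms $H^*(\Gamma_\alpha; \mathbb{F}_p)$, and similarly for the horospheres $Y \cap B_\alpha$. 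A careful comparison at degrees $k-1$ and $k$ shows that the Mayer--Vietoris connecting map picks up an infinite-dimensional image in $H^k(\Gamma; \mathbb{F}_p)$ coming from the horospheres, which---unlike the horoballs---contribute a shifted top-degree class that is not absorbed by $H^*_\Gamma(Y; \mathbb{F}_p)$.

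The main obstacle is the connectivity step: verifying $(k-1)$-connectivity of the cocompact core without access to the Borel--Serre bordification tools available in characteristic zero. This requires direct combinatorial descending-link computations on the Bruhat--Tits building, which are tractable here only because the specialization to $\mathbf{SL_2}$ makes the descending links elementary joins of pairs of points. Extending the argument to higher-rank $\mathbf{G}$---the motivation for the present paper---will require substantially more delicate positive-characteristic spherical-building connectivity input.
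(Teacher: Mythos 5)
First, a point of context: the paper does not prove this statement --- it is quoted from Stuhler \cite{St}, whose argument works directly with the cell stabilizers of the $\mathbf{SL_2}(\mathcal{O}_S)$-action on the product of trees (these stabilizers are products of copies of $\mathbb{F}_p$) and feeds them into the equivariant cohomology spectral sequence. Your decomposition $X = Y \cup \bigsqcup_\alpha B_\alpha$ with a Mayer--Vietoris sequence in Borel equivariant cohomology is a legitimate alternative framework, and your sketch of the first half (type $F_{k-1}$ via descending links and Brown's criterion) is essentially correct, up to the minor point that the descending link is a join of discrete sets of $p$ points per tree factor, hence a wedge of $(p-1)^{|S|}$ copies of $S^{k-1}$ rather than a single sphere; this does not affect the $(k-2)$-connectivity and noncontractibility you need.

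The genuine gap is in the second half, at exactly the step you label ``a careful comparison at degrees $k-1$ and $k$.'' The image of the connecting map in $H^k(\Gamma;\mathbb{F}_p)$ is the cokernel of the restriction
$$H^{k-1}_\Gamma(Y)\,\oplus\,\bigoplus_\alpha H^{k-1}(\Gamma_\alpha;\mathbb{F}_p)\ \longrightarrow\ \bigoplus_\alpha H^{k-1}_{\Gamma_\alpha}(\partial B_\alpha;\mathbb{F}_p),$$
and the infinite-dimensional groups $H^{*}(\Gamma_\alpha;\mathbb{F}_p)$ you invoke sit in the \emph{source} of this map, so their infinitude by itself proves nothing about the cokernel. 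What you actually need is that the equivariant cohomology of each horosphere is infinite \emph{modulo} the image of the horoball, i.e.\ that $\bigoplus_\alpha H^k_{\Gamma_\alpha}(\overline{B}_\alpha,\partial B_\alpha;\mathbb{F}_p)$ contributes an infinite-dimensional piece. That relative group is controlled by the topology of the horosphere --- a $(k-2)$-connected but not $(k-1)$-connected complex with infinitely generated $(k-1)$-dimensional homology --- together with the action of the infinite $p$-group $\Gamma_\alpha$ on that homology; one must produce $\Gamma_\alpha$-invariant functionals detecting infinitely many independent classes. This is precisely the content supplied by Stuhler's explicit stabilizer computation (and, in the general case treated by this paper, by the Schulz connectivity input, Kolchin's theorem, and the cocycles $\Phi_n$); as written, your proposal asserts the conclusion of this step rather than proving it. Two smaller corrections: for $|S|\geq 2$ the stabilizer of a horoball contains $\mathcal{O}_S^\times$ (the product formula makes the $S$-unit torus preserve the summed Busemann function), which is infinite by the $S$-unit theorem, so $\Gamma_\alpha$ is neither commensurable with $\mathbf{U}(\mathcal{O}_S)$ nor elementary abelian; and since $\Gamma$ may contain $p$-torsion, $H^*_\Gamma(Y;\mathbb{F}_p)$ is not $H^*(\Gamma\backslash Y;\mathbb{F}_p)$, though cocompactness of $Y$ does keep it finite in each degree, which is what your argument requires.
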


Recall that a group $\pi$ is \emph{of type }$F_n$ if there exists a $K(\pi,1)$ with finite $n$-skeleton. 

It is well-known, by Selberg's Lemma, that ${\bf SL_2}(\mathcal{O}_S)$, or that any arithmetic group over function fields ${\bf G}(\mathcal{O}_S)$ as above, contains a finite-index subgroup whose only torsion elements are $p$-elements.

Bux-K\"{o}hl-Witzel \cite{B-G-W} completely generalized ``half" of Theorem~\ref{t:st} with the following theorem.

\begin{theorem}\label{t:bgw}
If $\bf G$ is $K$-isotropic, then ${\bf G}(\mathcal{O}_S)$ is of type $F_{k({\bf G},S)-1}$.
\end{theorem}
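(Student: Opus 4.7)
The plan is to establish type $F_{k({\bf G},S)-1}$ by an application of Brown's criterion to the action of ${\bf G}(\mathcal{O}_S)$ on the product of Bruhat--Tits buildings $X := \prod_{v \in S} X_v$, where $X_v$ is the building of ${\bf G}(K_v)$. By Bruhat--Tits theory $X$ is a CAT$(0)$, contractible polyhedral complex of dimension $n := k({\bf G},S)$, and after replacing ${\bf G}(\mathcal{O}_S)$ by a neat finite-index subgroup (available by Selberg's lemma, as already noted in the paper) the cell stabilizers become trivial, so finiteness properties of $\Gamma$ are detected by cocompactness and connectivity of a suitable filtration of $X$.

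Next I would invoke Harder's reduction theory over the function field $K$ to produce a ${\bf G}(\mathcal{O}_S)$-invariant Morse function $h : X \to \mathbb{R}$ whose sublevel sets $X_{\leq t}$ are cocompact. The function $h$ should be assembled from Busemann functions associated with horoballs pointing into the finitely many cuspidal directions, indexed by $\Gamma$-conjugacy classes of proper $K$-parabolic subgroups ${\bf P} \leq {\bf G}$. The statement that such horoballs exhaust the complement of a cocompact core is exactly the reduction-theoretic input; in characteristic $p$ this is due to Harder, and it is the reason this approach works uniformly in rank.

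The technical heart of the argument is to compute the descending link of every vertex with respect to $h$. For a vertex heading into the cusp of a parabolic ${\bf P} = {\bf L U}$ with Levi decomposition, the descending link should be homotopy equivalent to a join, taken over $v \in S$, of pieces of the spherical Bruhat--Tits building of $\bf L$ over $K_v$ together with the $K_v$-spherical link of the simplex at infinity defining $\bf P$. By the Solomon--Tits theorem each such spherical building is a wedge of top-dimensional spheres, and a join-connectivity calculation (together with bookkeeping of the ``opposition'' directions transverse to ${\bf P}$) should show that the descending link is $(n-2)$-connected for every vertex and every cusp. With this in hand, Brown's criterion gives type $F_{n-1}$.

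The main obstacle is precisely the descending-link analysis: one must assemble the local combinatorics of spherical links in each factor $X_v$ with the global $K$-rational parabolic structure, carefully distinguish ``opposite'' versus ``parallel'' directions relative to the chosen cusp, and then verify $(n-2)$-connectivity uniformly. A subtlety specific to the function-field setting, which distinguishes this from the analogous number-field theorem, is that the unipotent radicals of $K$-parabolics are not finitely generated; this is the feature that blocks any stronger conclusion of the form ``type $F_n$'' and is in fact what produces the infinite-dimensional cohomology in Stuhler's Theorem~\ref{t:st} (and, ultimately, in the main theorem of the present paper).
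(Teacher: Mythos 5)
This theorem is the Rank Theorem of Bux--K\"ohl--Witzel; the paper does not prove it but cites \cite{B-G-W}, and your outline follows the same broad strategy (Harder's reduction theory, an invariant height function, combinatorial Morse theory on $X=\prod_{v\in S}X_v$). Two of your steps, however, are wrong as stated. First, you cannot ``replace ${\bf G}(\mathcal{O}_S)$ by a neat finite-index subgroup'' with trivial cell stabilizers: in characteristic $p$ these arithmetic groups are in general not virtually torsion-free when $\bf G$ is $K$-isotropic (every finite-index subgroup meets the infinite $p$-torsion unipotent subgroups ${\bf U}(\mathcal{O}_S)$ nontrivially); Selberg's Lemma here only yields a subgroup whose torsion is $p$-torsion, and this failure of torsion-freeness is exactly why the paper's main theorem is phrased for residually $p$-finite subgroups. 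The error is harmless for the finiteness property itself, since Brown's criterion only needs the cell stabilizers to be of type $F_\infty$ and they are finite, but the reduction you propose is not available.

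Second, and more seriously, the technical heart --- the descending-link computation --- is left at the level of ``should show,'' and the tool you name for it is not the right one. The descending link of a vertex with respect to the Bux--K\"ohl--Witzel height function is not a spherical building, nor a join of spherical buildings of Levi factors, so the Solomon--Tits theorem does not apply. It is (up to a join with a horizontal part) the subcomplex of the vertex link supported on the complement of the closed ball of radius $\frac{\pi}{2}$ around the gradient direction of the relevant Busemann function --- a hemisphere complex in the sense of Schulz. Establishing that such complexes are $(k-2)$-connected, noncontractible, and $(k-1)$-dimensional is precisely Schulz's Theorem B \cite{Sc}, quoted as Lemma~\ref{l:Schulz} above, and it is the essential geometric input that Solomon--Tits cannot replace. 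There is also substantial work, compressed in your outline into ``Harder's reduction theory produces a Morse function,'' in arranging a uniform, ${\bf G}(\mathcal{O}_S)$-invariant, cocompact reduction datum so that the height function is well defined and its descending links actually take the hemisphere-complex form; this occupies most of \cite{B-G-W} and of Section~\ref{s:BGW} of the present paper.
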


Important evidence for the theorem of Bux-K\"{o}hl-Witzel was contributed by Behr \cite{Behr}, Abels \cite{Abels}, Abramenko \cite{Abramenko}, and Bux-Wortman \cite{B-W2}.

There are now three proofs that ${\bf G}(\mathcal{O}_S)$ as in Theorem~\ref{t:bgw} is not of type $F_{k({\bf G},S)}$ due to Bux-Wortman \cite{B-W1}, Bux-K\"{o}hl-Witzel \cite{B-G-W}, and Kropholler \cite{K} as observed by  Gandini \cite{Ga}. However, outside of the case that $k({\bf G},S)=1$, the ``second half" of Stuhler's Theorem~\ref{t:st} had not been generalized to include any other arithmetic groups. This paper uses the results of Bux-K\"{o}hl-Witzel and Schulz \cite{Sc} to further generalize the results of Stuhler by proving

\begin{theorem}\label{t:mt}
Suppose $\bf G$ is $K$-isotropic. If $\Gamma$ is a finite-index subgroup 
of ${\bf G}(\mathcal{O}_S)$ that is residually $p$-finite, then ${\rm H}^{k({\bf G},S)}(\, \Gamma \,;\, \mathbb{F}_p\,)$ is infinite.
\end{theorem}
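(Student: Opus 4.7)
The plan is to combine a $\Gamma$-equivariant reduction of the Bruhat--Tits building in the style of Bux--K\"{o}hl--Witzel with Schulz's topological description of horospheres, and to produce infinitely many classes in $H^{n}(\Gamma;\mathbb{F}_p)$ that originate at cusps whose parabolic stabilizers contain infinite unipotent $p$-groups. Throughout I write $n=k({\bf G},S)$ and let $X=\prod_{v\in S}X_v$ be the $n$-dimensional contractible Euclidean building on which $\Gamma$ acts properly by cellular isometries. Residual $p$-finiteness of $\Gamma$ guarantees that every cell stabilizer is a finite $p$-group, so the $\Gamma$-equivariant spectral sequence with $\mathbb{F}_p$-coefficients gives $H^{\ast}(\Gamma;\mathbb{F}_p)\cong H^{\ast}_{\Gamma}(X;\mathbb{F}_p)$ without higher-isotropy complications.

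To isolate the cusp contribution I would use the reduction-theoretic Morse function of Bux--K\"{o}hl--Witzel, whose sublevel sets $X^{\le t}$ are cocompact modulo $\Gamma$ and $(n-2)$-connected, and whose complements decompose into finitely many $\Gamma$-orbits of horoball neighborhoods of the rational parabolic cusps. Schulz's theorem provides the crucial input, asserting that each generic horosphere is homotopy equivalent to a wedge of $(n-1)$-spheres with infinitely generated $H_{n-1}(-;\mathbb{F}_p)$. Combined with Shapiro's lemma, this identifies the equivariant $H^{n}$ of the pair $(X^{\le t_2},X^{\le t_1})$, for $t_1<t_2$ sufficiently large, with a direct product of $\mathbb{F}_p$-cohomology groups of the cusp stabilizers $\Gamma\cap{\bf P}(\mathcal{O}_S)$ taken with the infinite-rank horosphere module.

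The stabilizer of each cusp contains the intersection of $\Gamma$ with ${\bf U}(\mathcal{O}_S)$, where $\bf U$ is the unipotent radical of the underlying minimal rational parabolic. Because every element of ${\bf U}(\mathcal{O}_S)$ has order a power of $p$ (unipotent elements over a characteristic-$p$ field are $p$-torsion) and $\mathcal{O}_S$ is infinite, this intersection is an infinite $p$-group whose abelianization contains a direct sum of copies of $\mathbb{F}_p$; hence its $\mathbb{F}_p$-cohomology in every positive degree is infinite-dimensional. Pairing this with Schulz's horosphere module forces the relative equivariant cohomology in degree $n$ to be infinite-dimensional, and the long exact sequence of the pair together with the exhaustion $X=\bigcup_t X^{\le t}$ maps this relative cohomology into $H^{n}(\Gamma;\mathbb{F}_p)$.

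The main difficulty will be to show that the cusp classes are not annihilated on the way into $H^{n}(\Gamma;\mathbb{F}_p)$: one must control the connecting map from $H^{n-1}_{\Gamma}(X^{\le t_1};\mathbb{F}_p)$ and the behavior of the direct limit as $t_2\to\infty$. This is precisely where the residually $p$-finite hypothesis should be decisive, since the descending tower of finite-index normal subgroups with $p$-power quotients permits a transfer/restriction argument in $\mathbb{F}_p$-cohomology, and the finite generation of $H^{n-1}$ over each cocompact sublevel set bounds how much cancellation can occur in the long exact sequence, leaving an infinite-dimensional image in $H^{n}(\Gamma;\mathbb{F}_p)$.
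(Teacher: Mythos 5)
Your outline assembles the right ingredients (the Bux--K\"{o}hl--Witzel reduction datum and Morse function, Schulz's spherical subcomplexes, the cusps, residual $p$-finiteness), but two points keep it from being a proof. First, the assertion that finite $p$-group cell stabilizers let the equivariant spectral sequence run ``without higher-isotropy complications'' in $\mathbb{F}_p$-coefficients is backwards: a nontrivial finite $p$-group has nonzero $\mathbb{F}_p$-cohomology in every degree, so the isotropy terms $E^1_{\ell,q}=\bigoplus_{c}H_q(\Gamma_c;\mathbb{F}_p)$ with $q>0$ do \emph{not} vanish. This is precisely why the paper does not work on $X$ itself but instead attaches free $\Gamma$-cells to the cocompact $(k-2)$-connected complex $X_{k-2}$ to build a $k$-connected complex $X_k$ on which the non-free part of the action is confined to a cocompact subspace; only then are the offending $E^r_{\ell,q}$ ($q\geq 1$) finite, so that an infinite $E^2_{k,0}=H_k(\Gamma\backslash X_k;\mathbb{F}_p)$ cannot be cancelled. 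Relatedly, Schulz's theorem is a statement about descending links of special vertices being noncontractible wedges of $(k-1)$-spheres --- a local statement at each vertex --- not that a single horosphere has infinitely generated $H_{n-1}$; the infinitude in the paper comes from an unbounded sequence of special vertices $y_n$ marching out to the cusp $P$, each contributing one local sphere.

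Second, and more seriously, you explicitly defer ``the main difficulty'' --- showing the cusp classes are not annihilated en route to $H^{n}(\Gamma;\mathbb{F}_p)$ --- to an unspecified transfer/restriction argument. That difficulty is the theorem. The paper's resolution is concrete and is where residual $p$-finiteness actually enters: for each $n$ one chooses a normal subgroup $\Gamma_n\unlhd\Gamma$ of $p$-power index acting \emph{freely} on the orbit of the star of $y_n$, so that the local Schulz sphere injects into the relative homology $H_k(\Gamma_n\backslash X,\Gamma_n\backslash(X-\Gamma I_n);\mathbb{F}_p)$ and spans a nonzero finite-dimensional subspace $W_n$ on which the finite $p$-group $\Gamma/\Gamma_n$ acts; Kolchin's theorem on unipotent groups then produces a nonzero $\Gamma$-invariant functional $\varphi_n$ on $W_n$, which assembles into a genuine cocycle $\Phi_n$ on $\Gamma\backslash X_k$. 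One also needs to extend the local disks near $y_n$ to global disks with boundary in $X_{k-2}$ (Corollary~\ref{c:extend}) to have cycles $C_n$ against which the $\Phi_m$ pair triangularly, proving the classes are distinct. Without the $\Gamma_n$/Kolchin mechanism, or some substitute for it, your plan does not control the connecting maps you worry about, and the infinite cusp cohomology of ${\bf U}(\mathcal{O}_S)\cap\Gamma$ by itself gives no lower bound on $H^{n}(\Gamma;\mathbb{F}_p)$.
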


A group $\Gamma$ is  {\emph{residually $p$-finite}} if for any nontrivial $\gamma \in \Gamma$, there is a homomorphism of $\Gamma$ onto a finite $p$-group that evaluates $\gamma$ nontrivially. Such finite-index subgroups of ${\bf G}(\mathcal{O_S})$ are well-known to exist, by Platanov's Theorem, and we provide a proof of their existence in Section~\ref{s:frpf} for completeness.

To compare Theorems~\ref{t:st} and~\ref{t:mt}, notice that any torsion element of a residually $p$-finite group has order a power of $p$. The author does not know of an example of a finite-index subgroup $\Gamma \leq {\bf G}(\mathcal{O}_S)$ whose only torsion elements are $p$-elements, but such that $\Gamma$ is not residually $p$-finite.

As an example of Theorem~\ref{t:mt}, there is a finite-index subgroup of ${\bf SL_n}(\mathcal{O}_S)$ whose cohomology in dimension $|S|(n-1)$ with coefficients in $\mathbb{F}_p$ is infinite. In particular, there is a finite-index subgroup $\Gamma$ of ${\bf SL_n}(\mathbb{F}_p[t])$ such that $H^{n-1}(\Gamma;\mathbb{F}_p)$ is infinite.

\subsection{Outline of the proof.} 
To prove Theorem~\ref{t:st}, Stuhler analyzed the cell stabilizers of the ${\bf SL_2}(\mathcal{O}_S)$-action on the associated Euclidean building which is a product of regular $(p+1)$-valent trees. The cell stabilizers of  $\Gamma$ as in Theorem~\ref{t:st}  are products of the group $\mathbb{F}_p$, but  the cell stabilizers of a random arithmetic group acting on its associated Euclidean building are more difficult to describe and to work with, so our proof of Theorem~\ref{t:mt} proceeds in a different direction.

The main tool in our proof of Theorem~\ref{t:mt} is the work of Bux-K\"{o}hl-Witzel, and we spend a good portion of the beginning of our proof recalling their work. Let $k=k({\bf G},S)$ and let $X$ be the Euclidean building that 
${\bf G}(\mathcal{O}_S)$ acts on as a lattice. Bux-K\"{o}hl-Witzel  finds a ${\bf G}(\mathcal{O}_S)$-invariant, cocompact, $(k-2) $-connected complex $X_{k-2} \se X$. We attach $k$-cells and $(k+1)$-cells to $X_{k-2}$ to produce a $k$-connected complex $X_k$ endowed with a $\Gamma$-action and a $\Gamma$-equivariant map $\psi : X_k \rightarrow X$.

We find an unbounded sequence of points $\Gamma y_n \in \Gamma \backslash X$, and a sequence of normal subgroups $\Gamma_n$ of $\Gamma$ with index a power of $p$ such that each $y_n \in X$ is contained in  a neighborhood of $X$ that injects into $ \Gamma_n \backslash X $, and such that the $p$-group $\Gamma / \Gamma _n $ acts on the homology of the image of the neighborhood in the quotient, with coefficients in $\mathbb{F}_p$.  The action of the $p$-group on the homology group produces a functional that nontrivially, and $\Gamma$-invariantly, evaluates the image under $\psi$ of the attached $k$-cells in $X_k$. Therefore, for each $n$, we have an assignment of $k$-cells in $\Gamma \backslash X_k$ to elements of $\mathbb{F}_p$. This produces an infinite sequence in  ${\rm H}^{k}(\, \Gamma \backslash X_k  \,;\, \mathbb{F}_p\,)$.
The group $\Gamma$ may not act freely on $X_k$, but the lack of freeness is confined to a cocompact subspace of $X_k$, namely $X_{k-2}$, and that implies that  ${\rm H}^{k}(\, \Gamma \,;\, \mathbb{F}_p\,)$ is infinite.

\subsection{Acknowledgements} Thanks to Dan Margalit for a discussion that lead to the proof of this result and to Kai-Uwe Bux for careful explanations of his work with K\"{o}hl and Witzel.

Thanks also to Mladen Bestvina, Morgan Cesa, Brendan Kelly, Amir Mohammadi, and Dave Morris for helpful conversations, and to Stefan Witzel for pointing out some improvements that were made to an earlier version of this paper.

\section{Preliminaries on ${\bf G}(\mathcal{O}_S)$ and its action on a Euclidean building}

This section establishes some conventions for notation.

\subsection{Basic group structure.}\label{s:simpleroots} Let $K$, $\mathcal{O}_S$, and $\bf G$  be as in Theorem~\ref{t:mt}. Because $\bf G$ is $K$-isotropic, it contains a proper minimal $K$-parabolic subgroup $\bf{J}$.
 Let  $\bf A$ be a maximal $K$-split torus in $\bf J$, and let  $\bf P$ be a maximal proper $K$-parabolic subgroup of $\bf G$ that contains $\bf J$.

Recall the Langlands decomposition that $${\bf P}={\bf U}{\bf H}{\bf T} $$
where $\bf U$ is the unipotent radical of ${\bf P}$, $\bf H$  is a reductive $K$-group with $K$-anisotropic center, $\bf T$ is a 1-dimensional connected subtorus of $\bf A$, and $\bf T$ commutes with $\bf H$.

In the remainder of this paper we denote the product over $S$ of local points of a $K$-group by ``unbolding", so that, for example, $$G=\prod _{v\in S}{\bf G}(K_v)$$

\subsection{Euclidean building} Let $X$ be the Euclidean building for the semisimple group $G$. We let $k=k({\bf G},S)$ so that  $k={\text{dim}}(X)$.

For each $v \in S$ we choose a maximal $K_v$-split torus in $\bf J$ that contains $\bf A$, and name it ${\bf A}_v$. We let $\Sigma \se X$ be the apartment corresponding to the group $\prod_{v \in S} {\bf A}_v(K_v)$.

\section{Review of Bux-K\"{o}hl-Witzel and an unbounded sequence of points $y_n \in X$}\label{s:BGW}

Our proof makes use of two results from Bux-K\"{o}hl-Witzel \cite{B-G-W}: the existence of a ${\bf G}(\mathcal{O}_S)$-invariant, $(k-2)$-connected subcomplex $X_{k-2} \se X$ that is cocompact modulo ${\bf G}(\mathcal{O}_S)$, and a lemma that will allow us to extend certain ``local" $k$-disks about neighborhoods of points in $X$ to ``global" $k$-disks in $X$---Lemma~\ref{l:extendstar} and Corollary~\ref{c:extend} below. Most of this section is devoted to recalling the work of Bux-K\"{o}hl-Witzel. For details omitted from the account in this paper, see \cite{B-G-W}.

We will use the notation of \cite{B-G-W} in our Section~\ref{s:BGW} except for the following: we will refer to cells in the spherical building for $G$ by the parabolic groups they represent. For example, if $g \in G$ and we write that $g \in P$, then we are treating $P$ as a parabolic group, but if $x$ is a point in the visual boundary of $X$ and we write that $x \in P$, then we are treating $P$ as the simplex in the visual boundary of $X$ that corresponds to $P$. The correct interpretation should always be clear from context.

\subsection{Busemann function for $P$.}\label{s:bf}
For each $v \in S$, let $X_v$ be the Euclidean building for ${\bf G}(K_v)$, so that $X=\prod_{v\in S}X_v$. If $\mathcal{O}_v \se K_v$ is the ring of integers, then we let $x_v$ be the vertex in $X_v$ stabilized by ${\bf G}(\mathcal{O}_v)$.

Let $\mathbb{A}_K$ be the ring of adeles for $K$, and let $\mathbb{A}_S$ be the subring of $S$-adeles. The group  ${\bf G}(\mathbb{A}_S)$ has a natural left action on $X$. 
Given a point $y \in X$ we let $ {\bf G}(\mathbb{A}_S)_y$ be the stabilizer of $y$ in  ${\bf G}(\mathbb{A}_S)$.

Following Harder (\cite{H}) and \cite{B-G-W}, for any $y \in \prod_{v \in S}{\bf G}(K_v)x_v$ we let $$\tilde{\beta}_{P}(y)=\log_q \Big[{\text{vol}}\big[{\bf U}(\mathbb{A}_K) \cap {\bf G}(\mathbb{A}_S)_y \big]\Big]$$
where $q$ is the cardinality of the field of constants in $K$.

We let $\chi _{{\bf P}}$ be the canonical character of ${\bf P}$. (See Section 1.3 \cite{H} for the definition of $\chi _{{\bf P}}$.) The essential feature of $\chi _{{\bf P}}$ that will be used below is that the determinant of conjugation by $g \in P$ on $U$ is $\chi _{{\bf P}} (g)$.

If $g \in P$, then we have the following transformation rule from Harder \cite{H} Satz 1.3.2: 
$$ \tilde{\beta}_{P}(g y)=   \tilde{\beta}_{P}( y) + \log _q ( || \chi _{{\bf P}} (g)||) $$  where $||\cdot||$ denotes the idele norm.
 (There is a difference in sign in the line above with \cite{H} and  \cite{B-G-W} that comes from our convention of using left actions in this paper rather than right actions as in  \cite{H} and  \cite{B-G-W}.)

Recall that a Busemann function on the Euclidean building $X$ is given by first choosing a unit speed geodesic $\rho \se X$ and then assigning to any point $x\in X$ the limit as $t \to \infty$ of the difference between the distance between $\rho (t)$ and $\rho(0)$ and the distance between $\rho(t)$ and $x$.

\begin{proposition}\label{p:factores}
There is some $s>0$ and a Busemann function $\beta _{P}: X \rightarrow \mathbb{R}$ such that $\beta _{P}(y)=\tilde{\beta}_{P}(y)$ for all  $y \in \prod _{v \in S}{\bf G}(K_v) x_v$, and such that $\beta _{P}$ is nonconstant on factors of $X$.
\end{proposition}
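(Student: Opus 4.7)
The plan is to realize $\beta_P$ geometrically as a Busemann function based at the ideal point determined by the parabolic $\bf P$, and then match the normalization so that it agrees with the adelic volume expression $\tilde\beta_P$ on the vertex orbit.

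First, I would identify the data. Since $\bf P$ is a maximal proper $K$-parabolic of $\bf G$, for each $v \in S$ the group ${\bf P}(K_v)$ is a proper parabolic subgroup of ${\bf G}(K_v)$ (we use that $\bf G$ is $K$-isotropic, hence $K_v$-isotropic), and so determines a nontrivial simplex $\xi_v$ in the spherical building at infinity of $X_v$. The product $\xi = \ast_{v \in S} \xi_v$ is a simplex in the spherical join at infinity of $X = \prod_v X_v$ that is stabilized by $P = \prod_v {\bf P}(K_v)$. Because $\bf T$ is one-dimensional and commutes with $\bf H$, there is essentially a single direction of translation on $\xi$; I would pick a point $\eta \in \xi$ in the interior, a unit-speed geodesic ray $\rho$ from some $x_0 \in \prod_v {\bf G}(K_v) x_v$ to $\eta$, and let $b_\rho$ denote the associated Busemann function.

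Next, the key computation is the transformation rule for $b_\rho$ under $P$. Since every $g \in P$ fixes $\eta$ and acts by an isometry of $X$, one has $b_\rho(gy) = b_\rho(y) + c(g)$ for a homomorphism $c : P \to \RR$; standard structure theory of Bruhat--Tits buildings identifies $c$ with the translation length of $g$ along any geodesic towards $\eta$, which in turn factors through $P \to P/[P,P]$ and so is a multiple of the canonical character. Comparing with Harder's rule $\tilde\beta_P(gy) = \tilde\beta_P(y) + \log_q \|\chi_P(g)\|$, I would conclude that there exists a positive real $s$ with $c(g) = s^{-1}\log_q\|\chi_P(g)\|$; after rescaling, the Busemann function $\beta_P := s \cdot b_\rho$ transforms identically to $\tilde\beta_P$ under $P$. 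The factor $s$ in the proposition statement is this normalizing ratio between the geometric and adelic scales.

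To upgrade the matching transformation rule into the equality $\beta_P = \tilde\beta_P$ on all of $\prod_{v \in S} {\bf G}(K_v) x_v$, I would first adjust $\beta_P$ by an additive constant so that equality holds at $x_0$, and then extend by the $P$-equivariance. Because $G$ acts transitively on vertices of each type, and vertices of other types are reached by moving along one of finitely many chamber orbits from a $P$-translate of $x_0$, both $\beta_P$ and $\tilde\beta_P$ are determined on the full vertex orbit once they agree on $P \cdot x_0$ and transform correctly under the remaining cell stabilizers (which lie in $P$ up to compact subgroups that leave both functions invariant, since a compact subgroup acts trivially on $\chi_P$ and preserves adelic volumes). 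Finally, nonconstancy on factors is automatic: because $\xi$ has a nontrivial component $\xi_v$ in each $X_v$, the Busemann function $b_\rho$ restricted to any geodesic in the $v$-factor asymptotic to $\xi_v$ decreases linearly, so $\beta_P$ is nonconstant on each $X_v$.

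The main obstacle is the scaling/normalization step: showing that $\tilde\beta_P$, defined in terms of adelic volumes of unipotent intersections, really pulls back to a multiple of the translation-length character along $\eta$. This requires unwinding the definition of $\chi_P$ and the relationship between the $\mathbb{A}_S$-volume of ${\bf U}(\mathbb{A}_K) \cap G_y$ and the $K_v$-Haar measures on the chamber stabilizers for each $v \in S$; once that identification is made, everything else is bookkeeping with the transformation rule from \cite{H} Satz 1.3.2.
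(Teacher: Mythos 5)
The paper does not actually prove this proposition; it cites Proposition 12.2 of \cite{B-G-W}, which in turn rests on Harder's reduction theory. Your overall strategy --- realize $\beta_P$ as a Busemann function centered at a point $\eta$ of the simplex $\xi=\ast_{v\in S}\,\xi_v$ at infinity fixed by $P$, use the cocycle identity $b_\rho(gy)=b_\rho(y)+c(g)$, match against Harder's transformation rule, and then propagate equality over the vertex orbit (your extension step is sound, and is really just the Iwasawa decomposition $G=PK_0$ together with the fact that $\|\chi_{\bf P}\|$ is trivial on the compact group $P\cap K_0$) --- is the right one. But there is a genuine gap at the normalization step, and that step is where the actual content of the proposition lives.

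The gap is your assertion that for an arbitrary interior point $\eta$ of $\xi$ the cocycle $c:P\to\mathbb{R}$ ``factors through $P\to P/[P,P]$ and so is a multiple of the canonical character.'' Factoring through the abelianization is correct, but the space of continuous homomorphisms $P=\prod_{v\in S}{\bf P}(K_v)\to\mathbb{R}$ has dimension at least $|S|$ (and larger still if some ${\bf P}(K_v)$ fails to be a maximal parabolic of ${\bf G}(K_v)$): already for $|S|=2$ the homomorphisms $g\mapsto\log_q|\chi_{\bf P}(g_v)|_v$ at the two places are linearly independent, and distinct interior points of $\xi$ produce distinct positive combinations of them. Only one ray in this cone equals $\log_q\|\chi_{\bf P}(\cdot)\|=\sum_{v}\log_q|\chi_{\bf P}(\cdot)|_v$. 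Your appeal to ``$\bf T$ is one-dimensional'' concerns the $K$-character lattice of $\bf P$, not the $K_v$-character lattices that govern the geometry at infinity of $\prod_v X_v$, so it does not single out the correct direction. With the wrong $\eta$, no rescaling $s\cdot b_\rho+\mathrm{const}$ can agree with $\tilde\beta_P$ on $Px_0$, because the two functions transform by non-proportional homomorphisms. Identifying the specific $\eta$ whose associated affine functional on an apartment containing $P$ in its boundary is a positive multiple of $\sum_v\log_q|\chi_{\bf P}(\cdot)|_v$ is precisely the computation done in Harder and in \cite{B-G-W}; it is also what delivers nonconstancy on every factor, since that particular $\eta$ has a nontrivial component in each $\xi_v$ because $|\chi_{\bf P}|_v$ is nontrivial on ${\bf T}(K_v)$ for each $v\in S$.
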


\begin{proof}
This is Proposition 12.2 of  \cite{B-G-W}.
\end{proof}

\begin{lemma}\label{l:inv}
The Busemann function $\beta_{P}$ is invariant under the actions of $U$, $H$, and ${\bf T}(\mathcal{O}_S)$ on $X$, and thus is invariant under the action of ${\bf P}(\mathcal{O}_S) \leq UH{\bf T}(\mathcal{O}_S)$.
\end{lemma}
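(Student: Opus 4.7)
The plan is to apply the transformation rule
$\tilde{\beta}_{P}(gy) = \tilde{\beta}_{P}(y) + \log_q(||\chi_{{\bf P}}(g)||)$
recorded just before the lemma, and to verify that $||\chi_{{\bf P}}(g)||=1$ for $g$ in each of $U$, $H$, and ${\bf T}(\mathcal{O}_S)$. Since $\chi_{{\bf P}}$ is a $K$-rational character of ${\bf P}$ and ${\bf U}$ is unipotent, its restriction to ${\bf U}$ is trivial, so $\chi_{{\bf P}}(g)=1$ at every place of $S$ for every $g \in U$. Similarly, because ${\bf H}$ is reductive with $K$-anisotropic center, its abelianization admits no nontrivial $K$-rational character into $\mathbb{G}_m$; consequently $\chi_{{\bf P}}|_{{\bf H}}=1$ and $\chi_{{\bf P}}(g)=1$ for every $g \in H$. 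In both cases $||\chi_{{\bf P}}(g)||=1$ trivially.

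The case $g \in {\bf T}(\mathcal{O}_S)$ is the only one that requires more than formal input, and it is where the product formula enters. Embedding $g$ diagonally into $\prod_{v \in S}{\bf T}(K_v)$, the value $\chi_{{\bf P}}(g)$ is the diagonal image of an $S$-unit of $K^\times$, namely $\chi_{{\bf P}}(g) \in \mathcal{O}_S^\times$. At every place $v \notin S$ this element is a $v$-adic unit, so $|\chi_{{\bf P}}(g)|_v = 1$ there, and the product formula $\prod_{\text{all }v}|\chi_{{\bf P}}(g)|_v = 1$ then forces $||\chi_{{\bf P}}(g)|| = \prod_{v \in S}|\chi_{{\bf P}}(g)|_v = 1$. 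Hence again $\tilde{\beta}_{P}(gy) = \tilde{\beta}_{P}(y)$.

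These three computations establish $\beta_{P}(gy)=\beta_{P}(y)$ on the orbit $\prod_{v\in S}{\bf G}(K_v)x_v$, where by Proposition~\ref{p:factores} the functions $\beta_{P}$ and $\tilde{\beta}_{P}$ agree. To extend to all of $X$, I use that each of $U$, $H$, and ${\bf T}(\mathcal{O}_S)$ sits inside $P$, and $P$ fixes pointwise the simplex at infinity corresponding to ${\bf P}$, which contains the direction at infinity defining the Busemann function $\beta_{P}$. Thus for such $g$, the translate $\beta_{P} \circ g$ is a Busemann function for the same direction at infinity as $\beta_{P}$, so $\beta_{P} \circ g - \beta_{P}$ is a constant on $X$; that constant vanishes because it vanishes on the orbit. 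Invariance under ${\bf P}(\mathcal{O}_S)$ then follows by combining the three cases via the inclusion ${\bf P}(\mathcal{O}_S) \leq UH{\bf T}(\mathcal{O}_S)$ recorded in the statement. The main (modest) obstacle is the ${\bf T}(\mathcal{O}_S)$ case, which rests essentially on the product formula applied to $\chi_{{\bf P}}(g)$.
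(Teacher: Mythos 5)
Your proof is correct and follows essentially the same route as the paper: triviality of $\chi_{\bf P}$ on $\bf U$ (unipotence) and on $\bf H$ (reductive with $K$-anisotropic center), plus the product formula together with integrality at places outside $S$ for ${\bf T}(\mathcal{O}_S)$. The only difference is that you spell out the passage from the orbit $\prod_{v\in S}{\bf G}(K_v)x_v$ to all of $X$ via the constancy of $\beta_P\circ g-\beta_P$, a step the paper leaves implicit; your treatment of it is fine.
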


\begin{proof}
Any $K$-defined character on ${\bf P}$, including the canonical character $\chi _{{\bf P}} $, evaluates ${\bf U}$ trivially since it is unipotent and ${\bf H}$ trivially since it is reductive with $K$-anisotropic center. Thus the result for $U$ and $H$ follows from the transformation rule above. 

Similarly, we need to observe that $||\chi_{{\bf P}}(t)||=1$ for any
 $t \in {\bf T}(\mathcal{O}_S)$. This follows from the product formula (since $\chi_{{\bf P}}(t) \in K$) and from the fact that ${\bf T}(K_w)$ is bounded if $w \notin S$.
 \end{proof}

\subsection{Descending chambers at a vertex}
Given a vertex $x \in X$, we let ${\rm St}(x) \se X$ denote the star of $x$, the union of all chambers in $X$ that contain $x$. Thus, the boundary of the star -- denoted as $\partial {\rm St}(x)$ -- is the link of $x$.

We let ${\rm St}^\downarrow(x)$ denote the union of chambers $\mathfrak{C} \se X$ containing $x$ with the property that $\beta_{P}(z)<\beta_{P}(x)$ for all $z \in \mathfrak{C}$ with $z \neq x$. We let $B{\rm St}^\downarrow(x)={\rm St}^\downarrow(x) \cap \partial {\rm St}(x)$.

Recall that a special vertex  $x \in \Sigma $ is a vertex that is contained in a representative from each parallel family of walls in the Coxeter complex $\Sigma $. Thus, the Coxeter complex of an apartment in the spherical building $\partial {\rm St}(x)$ is isomorphic to the Coxeter complex of an apartment in the boundary of $X$ when $x$ is special.

The following result is due to Schulz \cite{Sc}.

\begin{lemma}\label{l:Schulz}
If $x \in X$ is  a special vertex, then $B {\rm St}^\downarrow(x)$ is homotopy equivalent to a noncontractible wedge of $(k-1)$-spheres.
\end{lemma}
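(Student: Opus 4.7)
The plan is to translate the statement into a question about spherical buildings and then apply Schulz's theorem from \cite{Sc}. The key identifications are standard once one pins down the right genericity condition for the Busemann direction in the link.

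First, I would observe that because $x$ is a special vertex, the link $\partial {\rm St}(x)$ is a spherical building of dimension $k-1$, with apartments whose Coxeter structure matches that of apartments at infinity of $X$. Moreover, under the product decomposition $X = \prod_{v \in S} X_v$, the link $\partial {\rm St}(x)$ is the spherical join over $v \in S$ of the links $\partial {\rm St}(x_v)$ in each factor.

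Next, I would interpret $B {\rm St}^\downarrow(x)$ as a hemisphere subcomplex in this spherical building. The Busemann function $\beta_P$ is defined relative to a unit-speed geodesic ray, and so carries an asymptotic direction $\xi \in \partial_\infty X$. In any apartment $A$ through $x$ that also contains a ray asymptotic to $\xi$, the restriction $\beta_P|_A$ is affine-linear with gradient pointing toward $\xi$; thus a chamber $\mathfrak{C}$ with $x \in \mathfrak{C}$ belongs to ${\rm St}^\downarrow(x)$ iff the gradient of $\beta_P|_A$ at $x$ points strictly out of $\mathfrak{C}$. Passing to the link, this says that the open simplex of $\partial {\rm St}(x)$ dual to $\mathfrak{C}$ lies strictly in the open hemisphere determined by the antipode of the ``shadow'' $\xi_x$ of $\xi$ in $\partial {\rm St}(x)$. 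Therefore $B {\rm St}^\downarrow(x)$ is exactly the open hemisphere subcomplex of the spherical building $\partial {\rm St}(x)$ cut out by $\xi_x$.

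Third, I would verify that $\xi_x$ is in general position, meaning that it lies in the interior of a top-dimensional simplex of each factor's spherical building. The join decomposition of the link reduces this claim factorwise, and on each factor the condition that $\beta_P$ be nonconstant on factors (Proposition~\ref{p:factores}) supplies precisely the required genericity: if the shadow of $\xi_x$ on some factor lay on a wall, then $\beta_P$ would be invariant under a one-parameter family of directions parallel to that wall, forcing constancy of $\beta_P$ on a positive-dimensional stratum of the factor and contradicting Proposition~\ref{p:factores}.

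Finally, I would invoke Schulz's theorem \cite{Sc}, which states that the open hemisphere complex in a spherical building determined by a point in general position is homotopy equivalent to a nontrivial wedge of top-dimensional spheres. Applied to the $(k-1)$-dimensional spherical building $\partial {\rm St}(x)$ together with the point $\xi_x$, this gives the desired conclusion. The main obstacle in the plan is the middle step: one must identify the specific ``general position'' hypothesis in Schulz's theorem, i.e., avoidance of every wall of $\partial {\rm St}(x)$, with the cleaner ``nonconstant on factors'' condition from Proposition~\ref{p:factores}, and one must handle apartments through $x$ that do not contain $\xi$ by reducing to the factorwise picture in the spherical join.
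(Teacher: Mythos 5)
Your overall strategy is the paper's: identify $B{\rm St}^\downarrow(x)$ with the open hemisphere complex of the spherical building $\partial {\rm St}(x)$ determined by the gradient direction of $\beta_P$, and quote Schulz. Steps one, two, and four are essentially what the paper does. But your third step contains a genuine error: you have misidentified the hypothesis of Schulz's theorem, and the condition you propose to verify is actually false in this setting. The gradient direction of $\beta_P$ at infinity lies in the simplex corresponding to the maximal proper $K$-parabolic $\bf P$, which is a low-dimensional cell of the spherical building at infinity (over $K$ it is a vertex); it is emphatically not in the interior of a chamber of each factor, i.e.\ it does not avoid every wall. If Schulz's theorem required chamber-genericity, the argument would collapse here. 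What Theorem B of \cite{Sc} actually requires is much weaker: that the ``horizontal part'' of $\partial {\rm St}(x)$ is empty, i.e.\ that the north pole has nonempty projection to each join factor of the link, equivalently that $\beta_P$ is nonconstant on each join factor. That is exactly what Proposition~\ref{p:factores} supplies once one uses speciality of $x$ to match the join factors of $\partial{\rm St}(x)$ with the factors of $X$.

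Relatedly, your proposed derivation of genericity from Proposition~\ref{p:factores} is not valid: that proposition only rules out $\beta_P$ being constant on an entire factor, and $\beta_P$ is of course constant on many positive-dimensional strata (its level sets are hyperplanes in each apartment), so ``constancy on a positive-dimensional stratum'' is no contradiction. The fix is simply to replace your step three by the correct, weaker verification: for each $v\in S$ the restriction of $\beta_P$ to the factor $X_v$ is nonconstant, hence the shadow of the gradient direction in the corresponding join factor of $\partial{\rm St}(x)$ is nonempty, and Schulz's Theorem B applies to give that the open hemisphere complex is $(k-1)$-dimensional, $(k-2)$-connected, and noncontractible, hence a noncontractible wedge of $(k-1)$-spheres.
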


\begin{proof}
Recall that the Busemann function $\beta_{P}$ is nonconstant on the factors of $X$. Since $x$ is a special vertex, the join factors of $\partial {\rm St}(x)$ correspond to the factors of $X$. Therefore, $\beta_{P}$ is nonconstant on the join factors of $\partial {\rm St}(x)$. That is to say, in the terminology used in \cite{B-G-W}, the ``vertical part" of  $\partial {\rm St}(x)$ is $\partial {\rm St}(x)$ in its entirety.

Notice that $B {\rm St}^\downarrow(x)$ is exactly the maximal subcomplex of $\partial {\rm St}(x)$ that is supported on the complement of the closed ball of radius $\frac \pi 2$ around the gradient direction of $\beta_{P}$ in $ \partial{\rm St}(x)$. Thus, by Theorem B of \cite{Sc} -- restated in Theorem 4.6 of \cite{B-G-W} -- $B {\rm St}^\downarrow(x)$ is $(k-1)$-dimensional, $(k-2)$-connected, and noncontractible.
\end{proof}

See also Theorem A.2 of Dymara-Osajda \cite{D}.

\subsection{Reduction datum}\label{s:rd} If $\bf M_a$ is a maximal proper $K$-parabolic subgroup of $\bf G$, then we can define a Busemann function $\beta_{M_a}$ with respect to $\bf M_a$ similarly to how we defined $\beta_{P}$ with respect to ${\bf P}$.

In \cite{B-G-W}, and following \cite{H}, there are real constants $r<R$ such that the collection of Busemann functions $\beta_{M_a}$ forms what is called  a \emph{uniform ${\bf G}(\mathcal{O}_S)$-invariant and cocompact reduction datum}. (See Theorem 1.9 of \cite{B-G-W}.) The remainder of Section~\ref{s:rd} is a recollection of what this sort of datum entails. In Section~\ref{s:rd} we will use $\bf M_a$ to denote a maximal proper $K$-parabolic subgroup of $\bf G$. We will use $\bf M_i$ to denote a minimal $K$-parabolic subgroup of $\bf G$.

For $x \in X$ and a $K$-parabolic subgroup ${\bf Q} \leq {\bf G}$, we let $\beta _Q(x)$ be the maximum of all $\beta _ {M_a}(x)$ with ${\bf Q} \leq {\bf M_a}$.

Given an apartment $\Sigma' \se X$ that contains $Q$ as a cell in its boundary, and given $t \in \mathbb{R}$, we let $$Y_{\Sigma', {Q}}(t)=\{\,x \in \Sigma' \mid \beta_{ Q}(x)\leq t \,\}$$ This set is convex in $\Sigma '$ as it is the intersection of the convex sets $\Sigma ' \cap \beta_{M_a}^{-1}(\mathbb{R}_{\leq t})$ for $\bf M_a$ containing $\bf Q$. Thus, there is a closest point projection $${\text{pr}}^t_{\Sigma', {Q}}: \Sigma '\rightarrow Y_{\Sigma', {Q}}(t)$$

The group $\sigma_t(x , { Q})$ is defined to be the group of $\prod_{v\in S}K_v$-points of the intersection of all ${\bf M_a}$ that  contain $\bf Q$ and such that $\beta_{ M_a}({\text{pr}}^t_{\Sigma', { Q}}(x))=t$. We have that $\sigma_t(x , { Q}) \geq Q$ (as groups, not as cells in the boundary) and we say that $Q$ $t$-\emph{reduces} $x\in X$ if $\sigma _t(x,  { Q})={Q}$.

To say that the collection of $\beta _{M_a}$ is an $(r,R)$ \emph{reduction datum} for $r<R$ means that if $\bf M_i$ is a minimal $K$-parabolic subgroup of $\bf G$ that $r$-reduces $x \in X$, then  ${\bf M_i} \leq \sigma _R (x, M_i) $.

To say that the reduction datum is \emph{uniform} means that there exists a constant $d$ such that any point in a subset of $X$ whose diameter is less than $d$ can be $r$-reduced by a common minimal $K$-parabolic. We can assume, as in \cite{B-G-W}, by perhaps choosing a lesser $r$, that $d$ is greater than the diameter of closed stars of cells in $X$.

The reduction datum is ${\bf G}(\mathcal{O}_S)$-\emph{invariant} since $$\beta_{^\gamma M_a}(\gamma x)=\beta_{  M_a}(x)$$ for all $x \in X$, $\gamma \in {\bf G}(\mathcal{O}_S)$, and maximal proper $K$-parabolic $\bf M_a$. (Here $^\gamma M_a = \gamma M_a \gamma ^{-1}$.)

That the reduction datum is \emph{cocompact} means that for any real number $t\geq R$, the set of $x \in X$ for which $\beta _{M_i}(x) \leq t$ for all minimal $K$-parabolics $\bf M_i$ that $r$-reduce $x$ is cocompact with respect to the action of ${\bf G}(\mathcal{O}_S)$.

\subsection{Definition of height}\label{s:h} In \cite{B-G-W}, the reduction datum is used to define a height function $h: X \rightarrow \mathbb{R}_{\geq 0}$. In Section~\ref{s:h}, we recall this definition.

Choose a special vertex $z \in \Sigma$, and let $W_z$ be the spherical Coxeter group that fixes $z$ in $\Sigma $. 

The affine space $\Sigma$ may be realized as a vector space with origin $z$.
Let $V_z$ be the set of all differences of vertices in $\Sigma$ whose closed stars intersect, where we regard vertices in this context as vectors in $\Sigma$. Notice that $V_z$ is finite.

We let $D=W_zV_z$.
Again, realizing points of $D$ as vectors of the vector space $\Sigma $ with origin $z$, we let 
$$Z(D)=\Big\{\, \sum_{d \in D}a_dd \mid 0 \leq a_d \leq 1 \text{ for all } d \in D \Big\}$$

The set $Z(D) \se \Sigma $ depended on the choice of vertex $z$, but modulo isometric translations  of $\Sigma $, $Z(D)$ is defined intrinsically in terms of the geometry of $\Sigma$. Furthermore, if $\Sigma ' \se X$ is any apartment in $X$, then $\Sigma '$ is isometric to $\Sigma $ as Coxeter complexes, and thus $x+Z(D)$ is a well-defined subset of $\Sigma ' $ for any $x \in \Sigma '$.

To define a height function, a suitably large $R^*>R$ is chosen. For any apartment $\Sigma ' \se X$, any  $x \in \Sigma '$, and any minimal $K$-parabolic ${\bf M_i}$ such that $M_i$ represents a cell in the boundary of $\Sigma '$ that $r$-reduces $x$; the point $x^*_{\Sigma', {M_i}}$ is defined to be the closest point to $x$ in $Y_{\Sigma ', {M_i}}(R^*)-Z(D)$. Then $h(x)$ is defined as the distance between $x$ and $x^*_{\Sigma', {M_i}}$, and it is shown in Proposition 5.2 of \cite{B-G-W} to be independent of $\Sigma'$ or $\bf M_i$.

If $h(x)>0$, then $e(x)$ is defined as the point in the visual boundary of $\Sigma'$ that is determined as the limit point of the geodesic ray in $\Sigma ' $ from $x^*_{\Sigma', {M_i}}$ through $x$.  The point $e(x)$ is also shown to be independent of $\Sigma'$ or $\bf M_i$ in Proposition 5.2 of \cite{B-G-W}.
 If we let $\sigma (x)$ denote the group of $\prod_{v\in S}K_v$-points of the $K$-parabolic subgroup of $\bf G$ that is minimal with respect to the property that $\sigma (x)$ contains every $\sigma _R(x,M_i)$ for which $M_i$ $r$-reduces $x$, then $e(x)\in \sigma (x)$.

As the reduction datum used in this section is $\mathbf{G}(\mathcal{O}_S)$-invariant, we have that $h (\gamma x)=h(x)$ for any $\gamma \in \mathbf{G}(\mathcal{O}_S)$. And if $h(x)>0$, then $e(\gamma x)=\gamma e(x)$ and $\sigma (\gamma x)= \,^\gamma \sigma (x)$.

The subsets of $X$ whose values under $h$ are bounded from above are shown to have bounded quotient on ${\bf G} (\mathcal{O}_S) \backslash X$ (See Proposition 2.4 and Observation 5.5 of \cite{B-G-W}).

\subsection{Choice of $y_n$.}
We still have more to discuss about the results of \cite{B-G-W}, but we take a short break from our account of \cite{B-G-W} to establish a sequence of points in $X$ that will be used throughout our proof in this paper.

\begin{lemma}\label{l:yn} Let $N^*>0$ be twice the  maximum diameter of stars in $X$. We can choose $R^*\gg 0$ as above to satisfy the following:
There is a constant $C^* \in \mathbb{R}$, and  a geodesic ray $\ell_Y \se \Sigma$ that limits to a point $\ell_Y(\infty)$ in the simplex $P$ and is orthogonal to level sets of $\beta _P$ in $\Sigma$, such that every point $z$ in the $N^*$-neighborhood of $U\ell _Y$ in $X$ is $r$-reduced by $J$, has $h(z)=\beta_P(z)+C^*>0$, and has $e(z) = \ell_Y(\infty) \in P$.

Furthermore, there is a sequence of special vertices $y_n \in \Sigma$ that are contained in chambers of $\Sigma$ that intersect $\ell_Y$, such that $\beta_P(y_n)$ is a strictly increasing sequence of numbers, and such that the set of all $ (y_n)^*_{\Sigma, P}$ is a bounded set.
\end{lemma}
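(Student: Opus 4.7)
The natural choice is to let $\ell_Y$ be a unit-speed geodesic ray in $\Sigma$ whose asymptotic direction is the point at infinity associated to the Busemann function $\beta_P$. By the definition of a Busemann function, $\ell_Y$ is then orthogonal to the level sets of $\beta_P$ in $\Sigma$. Proposition~\ref{p:factores} says that $\beta_P$ is nonconstant on the factors of $X$, which is equivalent to the asymptotic direction lying in the relative interior of the join-simplex $P$ at infinity, so in particular $\ell_Y(\infty) \in P$ as demanded. The basepoint of $\ell_Y$ will be chosen far out along this direction so that the subsequent local claims hold.

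The crux is verifying the claimed properties in the $N^*$-neighborhood of $U\ell_Y$. Along $\ell_Y$, each Busemann function $\beta_{M_a}$ for $\bf M_a$ a maximal proper $K$-parabolic containing $\bf J$ restricts to a linear function on $\Sigma$ whose slope is the cosine of the spherical angle between the asymptotic direction of $\beta_{M_a}$ and $\ell_Y(\infty)$. By picking the basepoint of $\ell_Y$ sufficiently far out and $R^* \gg 0$, one verifies -- using the structure of the reduction datum and the Weyl chamber geometry of $J$ -- that the closest-point projection of $z$ onto $Y_{\Sigma,J}(r)$ lands on the face where enough of the $\beta_{M_a}$'s simultaneously equal $r$ to make $\sigma_r(z, J) = J$, i.e., to make $J$ $r$-reduce $z$. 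The uniformity constant $d$ of the reduction datum, which can be enlarged by shrinking $r$, then propagates this property from $\ell_Y$ to the full $N^*$-neighborhood. Extending to $U\ell_Y$ uses that $U \leq J$ together with the $U$-invariance of the relevant Busemann functions (as in Lemma~\ref{l:inv}).

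Granting the reduction step, the formulas $h(z) = \beta_P(z) + C^*$ and $e(z) = \ell_Y(\infty)$ follow from the definitions in Section~\ref{s:h}: the closest point in $Y_{\Sigma,J}(R^*) - Z(D)$ to a point $z$ near $\ell_Y$ is within a bounded distance of $\ell_Y \cap \{\beta_P = R^*\}$, and the displacement back to $z$ is, up to a bounded correction coming from $Z(D)$, along the direction $\ell_Y(\infty)$. Choosing $R^* \gg 0$ makes this picture hold uniformly and forces $h(z) > 0$, with $C^*$ essentially $-R^*$ up to a bounded error.

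For the sequence $y_n$, I would choose special vertices of $\Sigma$ lying in chambers that $\ell_Y$ successively traverses, indexed so that $\beta_P(y_n)$ is strictly increasing; such vertices exist because special vertices are discretely cocompact in $\Sigma$ and $\ell_Y$ crosses infinitely many chambers. Since $\ell_Y$ is orthogonal to the level sets of $\beta_P$ and $Z(D)$ is bounded, each projection $(y_n)^*_{\Sigma, P}$ lies a bounded distance from the single point $\ell_Y \cap \{\beta_P = R^*\}$, which gives the required boundedness. The main obstacle I anticipate is the careful verification in the second paragraph: understanding precisely which $\beta_{M_a}$ for $\bf M_a \neq \bf P$ are active at the projection to $Y_{\Sigma,J}(r)$, and how to transfer the $r$-reduction property from $\ell_Y$ to $U\ell_Y$ when the Busemann invariance of Lemma~\ref{l:inv} may not apply to every $\bf M_a$ in the same way it applies to $\bf P$; if direct invariance fails for some $\bf M_a$, a workaround is to argue apartment-by-apartment over the apartments $u\Sigma$ for $u \in U$, each of which shares the simplex $J$ with $\Sigma$ at infinity.
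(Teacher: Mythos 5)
Your overall strategy matches the paper's in outline: take $\ell_Y$ in the gradient direction of $\beta_P$ inside $\Sigma$, verify the reduction/height/direction claims along it, and pick special vertices in chambers it crosses. But there is a genuine gap at the step you yourself flag as the crux, namely propagating the properties from $\ell_Y$ (a subset of $\Sigma$) to its $N^*$-neighborhood in $X$. The uniformity constant $d$ of the reduction datum cannot do this job: uniformity only says that any set of diameter less than $d$ admits \emph{some} common $r$-reducing minimal $K$-parabolic, not that the specific parabolic $J$ $r$-reduces every point of $B_X(y;N^*)$, and it gives no control at all on the values of $h$ and $e$ at those points --- which is exactly what the later arguments (Lemma~\ref{l:same}, Corollary~\ref{c:extend}) need on all of $S_n$, a subset of $X$ and not of $\Sigma$. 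The paper's mechanism is different: it uses the decomposition $X=\prod_{v\in S}{\bf U}_v(K_v)\,\Sigma$, where ${\bf U}_v$ is the unipotent radical of a minimal $K_v$-parabolic contained in $\bf J$, to build a distance-nonincreasing retraction $\varrho:X\rightarrow\Sigma$ fixing $y$; any $z\in B_X(y;N^*)$ is then $u(u^{-1}z)$ with $u^{-1}z\in B_\Sigma(y;N^*)$, and since such $u$ normalize every $K$-parabolic containing $\bf J$ and leave the corresponding Busemann functions invariant (the analogue of Lemma~\ref{l:inv}), all three properties transfer. Your apartment-by-apartment workaround over $u\Sigma$ with $u\in U$ is in the right spirit but uses too small a group: $U$-translates of $\Sigma$ do not cover a neighborhood of $\ell_Y$ in $X$; one needs the full group $\prod_{v\in S}{\bf U}_v(K_v)$.

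Two smaller points. First, for the passage from $\ell_Y$ to $U\ell_Y$ the paper argues differently again: ${\bf U}(\mathcal{O}_S)$ is cocompact in $U$, a compact subset of $U$ fixes a subray of $\ell_Y$ pointwise (since $\bf U$ is the unipotent radical of $\bf P$ and $\ell_Y$ limits to $P$), so after shrinking $\ell_Y$ one has $U\ell_Y={\bf U}(\mathcal{O}_S)\ell_Y$, and then the ${\bf G}(\mathcal{O}_S)$-invariance of $h$ and of the reduction datum finishes the job. Second, Proposition~\ref{p:factores} (nonconstancy of $\beta_P$ on factors) is not equivalent to the gradient direction lying in the cell $P$; the latter is deduced in the paper from the $U$-, $H$-, and ${\bf T}(\mathcal{O}_S)$-invariance of $\beta_P$ (Lemma~\ref{l:inv}) together with the triviality of the $T$-action on the boundary of $\Sigma$, while nonconstancy on factors is used elsewhere (for Lemmas~\ref{l:Schulz} and~\ref{l:same}). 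Your treatment of the choice of the $y_n$ and the boundedness of the $(y_n)^*_{\Sigma,P}$ agrees with the paper.
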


\begin{proof}
There are $\text{rank}_{K}{\bf G}\leq {\text{dim}}(\Sigma)$ maximal proper $K$-parabolic subgroups that contain $\bf J$. The space $Y_{\Sigma , J}(R^*) \se \Sigma$ is the intersection of one half-apartment of $\Sigma$ for every maximal proper $K$-parabolic subgroup that contains $\bf J$, and the set $\beta_{P}^{-1}( R^*)\cap Y_{\Sigma , J}(R^*)$ is an unbounded face of the boundary of $Y_{\Sigma , J}(R^*)$. We call this face $F_{P,R^*}$. It has dimension equal to $\text{dim}(\Sigma)-1$.

We let $$\Omega(r,R^*,J,P)=\{\, x \in \Sigma \mid \sigma _{r} (x ,J)=J \text{ and }   \sigma _{R^*} (x ,J)=P  \,\}$$ 

For $x \in \Sigma$, we let $B_\Sigma(x;N^*) \se \Sigma $ be the ball in $\Sigma$ centered at $x$ with radius $N^*$.
Notice that by replacing $R^*$ with a greater constant, we may assume that there is some $x \in F_{P,R^*} \cap \Omega(r,R^*,J,P)$ such that 
$$  F_{P,R^*} \cap [B_\Sigma(x;N^*) +Z(D)] \se F_{P,R^*} \cap \Omega(r,R^*,J,P)$$ Furthermore, if $y$ is contained in the geodesic ray  $ \ell_Y \se \Sigma$ that begins at $x$, is orthogonal to $ F_{P,R^*}$, and is contained in $ \Omega(r,R^*,J,P)$, then $B_\Sigma(y;N^*) +Z(D) \se \Omega(r,R^*,J,P)$ as long as the distance between $y$ and $x$ is sufficiently large. We replace $\ell _Y$ with a subray so that $B_\Sigma(y;N^*) +Z(D) \se \Omega(r,R^*,J,P)$ for any $y \in \ell_Y$.

If $z$ is contained in the interior of $\Omega(r,R^*,J,P)$, then $e(z)$  is given by the direction of the gradient of $\beta _{P}$ restricted to $\Sigma$ --- which is the direction of $\ell_Y(\infty)$. Thus by Lemma~\ref{l:inv}, $UH e(z)=UH \ell_Y(\infty)=\ell_Y(\infty)=e(z)$. And $T \leq A$ acts trivially on the boundary of $\Sigma$, so we have $P e(z) = UHT e(z)=e(z)$ which implies that
 $e(z) \in P$.

We let $d_0$ be the constant difference of the distance between $y \in \ell_Y$ and  $\Sigma \cap \beta_{P}^{-1}(R^* )$ and the distance between $y +Z(D)$ and $\Sigma \cap \beta_{P}^{-1}( R^* )$. (Note that the latter of the two distances is $h(y)$.) Then for $z \in B_\Sigma(y;N^*)$, $h(z)=\beta_{P}(z)-R^*-d_0$. Thus we let $C^*=-R^*-d_0$.

Again let $y\in \ell_Y $ and now let $z \in B_X(y;N^*)$, where $B_X(y;N^*)$ is the ball in $X$ of radius $N$ that is centered at $y$. We will show that $z$ is $r$-reduced by $J$, has $h(z)=\beta_P(z)+C^*>0$, and has $e(z) = \ell_Y(\infty) \in P$.

For every $v \in S$, let ${\bf J}_v \leq {\bf G}$ be a minimal $K_v$-parabolic subgroup of $\bf G$ such that ${\bf A}_v \leq {\bf J}_v \leq {\bf J}$. We let ${\bf U}_v$ be the unipotent radical of ${ \bf J}_v$, so that  ${\bf U}_v \leq {\bf J}\leq {\bf P}$ and ${\bf U}_v \leq {\bf U}{\bf H}$.

If $X_v$ is the Euclidean building for ${\bf G}(K_v)$, and $\Sigma _v$ is the apartment that ${\bf A}_v(K_v)$ acts on, then because any point in $X_v$ is contained in a ${\bf J}_v(K_v)$ translate of $\Sigma_v$ $$X_v={\bf J}_v(K_v)\Sigma_v={\bf U}_v(K_v) {\bf Z_G( A}_v{\bf )}(K_v)\Sigma_v={\bf U}_v(K_v) \Sigma_v$$ where ${\bf Z_G( A}_v{\bf )}$ is the centralizer of ${\bf A}_v$ in $\bf G$, and thus is a Levi subgroup of ${\bf J}_v$.
Therefore, $$X= \prod_{v \in S}{\bf U}_v(K_v) \Sigma  $$ and there is a distance nonincreasing retraction $$\varrho : X \rightarrow \Sigma$$ defined on each $u \Sigma$ for $u \in \prod_{v \in S}{\bf U}_v(K_v)$ as the map $u^{-1} : u \Sigma \rightarrow \Sigma$.

So for  $z \in B_X(y;N^*)$ we choose $u \in \prod_{v \in S}{\bf U}_v(K_v)$ such that $u^{-1} z \in \Sigma$. Because $\varrho$ is distance nonincreasing and $\varrho(y)=y$, we have that $u^{-1}z \in B_\Sigma(y;N^*)$. By Lemma~\ref{l:inv}  $$\beta _{P}(z)+C^*=\beta _{P}(u^{-1}z)+C^*=h(u^{-1}z)>0$$

If $\bf Q$ is a proper $K$-parabolic subgroup of $\bf G$ containing $\bf J$, then $\bf Q$ contains ${\bf U}_v$ and thus
$u^{-1} {\bf Q}u={\bf Q}$, so applying the clear analogue of Lemma~\ref{l:inv} to each maximal proper $K$-parabolic group containing $\bf J$ yields $u Y_{\Sigma , J}(R^*) =  Y_{u\Sigma , J}(R^*)$ and that $z \in u \Omega(r,R^*,J,P)$ since $u^{-1}z \in B_\Sigma (y;N^*) \se  \Omega(r,R^*,J,P)$.
Thus,  $z$ is $r$-reduced by $uJu^{-1}=J$ and $u^{-1}(z^*_{u \Sigma, J})=(u^{-1}z)^*_{\Sigma, J}$ and $$h(z)=h(u^{-1}z)=\beta _{P}(z)+C^*$$

Furthermore,  as the set $ \Omega(r,R^*,J,P)$ limits to the cell $P$ and $u \in P$, the set $u \Omega(r,R^*,J,P)$ also limits to $P$ and thus
$$e(z)=e(u^{-1}z)=\ell_Y(\infty) \in P$$

To review, we have shown that for any  $z $ in the $N^*$-neighborhood of $\ell_Y$ in $X$ that
 $z$ is $r$-reduced by $J$, has $h(z)=\beta_P(z)+C^*>0$, and has $e(z) = \ell_Y(\infty) \in P$.
We still need to show the same results apply to the weaker condition that $z$ is contained in the  $N^*$-neighborhood of $U\ell_Y$ in $X$. For that, recall that $\bf U$ is unipotent, so ${\bf U}(\mathcal{O}_S)$ is a cocompact lattice in $U$. That is, there is a compact set $B \se U$ such that ${\bf U}(\mathcal{O}_S)B=U$. Since $\ell _Y $ limits to $ P$ and $\bf U$ is the unipotent radical of $\bf P$, any element of $U$ fixes pointwise a subray of $\ell _Y$. Therefore, there is a common subray of $\ell _Y$ that is fixed pointwise by every element of $B$. Thus, by replacing $\ell _Y$ with a subray we may assume that $B$ fixes $\ell _Y$ and thus that $$U \ell _Y = {\bf U}(\mathcal{O}_S) B \ell _Y ={\bf U}(\mathcal{O}_S) \ell _Y$$ Hence, if $z \in U B_X (\ell_Y ;N^*) = {\bf U}(\mathcal{O}_S) B_X (\ell_Y ;N^*)$ then $uz \in B_X (\ell_Y ;N^*)$ for some $u \in {\bf U}(\mathcal{O}_S)$, and since $h$ is ${\bf G}(\mathcal{O}_S)$-invariant and $\beta _P$ is $U$-invariant, $$h(z)=h(uz)=\beta _{P}(uz)+C^*=\beta _{P}(z)+C^*$$ Since the reduction datum is ${\bf G}(\mathcal{O}_S)$-invariant and $uz$ is r-reduced by $J$, we see that $z$ is $r$-reduced by $u^{-1}Ju=J$. Last, since $u \in {\bf U}(\mathcal{O}_S) \leq P$ and $e(uz )\in P$ we have $e(z)=u^{-1}e(uz)=e(uz) =\ell_Y(\infty)$.

To find the sequence of $y_n$, just choose an unbounded sequence of chambers in $\Sigma$ that intersect $\ell_Y$. Any chamber in $X$ contains a special vertex, and this produces the sequence of $y_n$. Because each of the $y_n \in \Sigma$ are a uniformly bounded distance from $\ell_Y$, each $ (y_n)^*_{\Sigma, P} \in F_{P,R^*}$ is a uniformly bounded distance from the point  $ x \in F_{P,R^*}$ .
\end{proof}

In the remainder of this paper, we shall abbreviate ${\rm St}(y_n)$ as $S_n$. Similarly, we shall abbreviate ${\rm St}^\downarrow(y_n)$ and $B{\rm St}^\downarrow(y_n)$ as $S_n^\downarrow$ and $BS^\downarrow_n$ respectively.

\subsection{Morse function}\label{s:morse}
Section~\ref{s:morse} is the final section in which we recount the work of Bux-K\"{o}hl-Witzel. In this section we recall the definition of a combinatorial Morse function from \cite{B-G-W} that is defined on the vertices of the barycentric subdivision of $X$ and used to deduce connectivity properties of subsets of $X$.

For any cell $\tau \in X$ we let ${\text{dim}}(\tau)$ be its dimension. There is also a number defined in \cite{B-G-W} as ${\text{dp}}(\tau)$ which refers to the ``depth" of a cell. We refer the reader to Section 8 of \cite{B-G-W} for the definition of the depth of a cell. 

We let  $\mathring{X}$ be the barycentric subdivision of the Euclidean building $X$.  For any cell $\tau \se X$, we let $\mathring{\tau}$ be its barycenter. Bux-K\"{o}hl-Witzel assigned to $\mathring{\tau}$ the triple of real numbers

$$f_{BKW}(\mathring{\tau})=\big(\max_{x \in \tau}(h(x)), {\text{dp}} (\tau), {\text{dim}}(\tau)\big)$$ The function $f_{BKW}$ is a combinatorial Morse function when triples of real numbers are ordered lexicographically.

For any triple of real numbers $s$ that is greater than or equal to the triple $s_0=(1,0,0)$, we let $\mathring{X}(s) $ be the subcomplex of $ \mathring{X}$ spanned by the $\mathring{\tau}$  for which $f_{BKW}(\mathring{\tau}) \leq s$. Since $f_{BKW}$ is ${\bf G}(\mathcal{O}_S)$-invariant, so to is $\mathring{X}(s)$. Since $\mathring{X}(s)$ is a closed subset of $\mathring{X}$ whose height is bounded, it is cocompact modulo ${\bf G}(\mathcal{O}_S)$. The values of $f_{BKW}$ are finite below any given bound, and we let $s+1$ denote the least value of $f_{BKW}$ that is greater than $s$.

 We let ${\rm Lk}(\mathring{\tau})$ be the link of $\mathring{\tau}$ in $\mathring{X}$, and we define the \emph{Morse descending link} of $\mathring{\tau}$ with respect to the Morse function $f_{BKW}$  to be the complex of simplices $\sigma \se{\text{Lk}}(\mathring{\tau})$ such that $f_{BKW}(v)<f_{BKW}(\mathring{\tau})$ for every vertex $v \in \sigma$. To obtain $\mathring{X}(s+1)$ we attach to $\mathring{X}(s)$ the descending links of cells $\mathring{\tau} \se \mathring{X}$ with $f_{BKW}(\mathring{\tau} )=s+1$. The work of Bux-K\"{o}hl-Witzel is to have defined $f_{BKW}$ in such a way as to utilize the work of Schulz \cite{Sc} in showing that the Morse descending links of vertices in $\mathring{X}$  are either contractible or spherical of dimension $(k-1)$. Thus, up to homotopy equivalence, $\mathring{X}(s+1)$ is obtained by attaching $k$-cells to $\mathring{X}(s)$. This process induces an isomorphism of homotopy groups $\pi_i(\mathring{X}(s))\cong \pi_i (\mathring{X}(s+1))$ for $i \leq k-2$.
 Since ${X}$ is contractible and the union of the $\mathring{X}(s)$, we have that  $\mathring{X}(s)$ is $(k-2)$-connected for any $s \geq s_0$. It is the existence of a ${\bf G}(\mathcal{O}_S)$-cocompact $(k-2)$-connected space that can be viewed as the main result of \cite{B-G-W} as it immediately implies  that ${\bf G}(\mathcal{O}_S)$ is of type $F_{k-1}$.

  In what remains, we will let $X_{k-2}=X(s_0)$.  In particular, $X_{k-2}$ is a $(k-2)$-connected subcomplex of $X$ that is invariant and cocompact under the action of  ${\bf G}(\mathcal{O}_S)$. 
 We will also pass to a subsequence of the $y_n$ to assume that $S_n\cap X_{k-2} = \emptyset$ for all $n$.

The following lemma demonstrates the  compatibility of $\beta_{P}$ and $f_{BKW}$ on $S_n$.

\begin{lemma}\label{l:same} The Morse descending link of $y_n$ with respect to $f_{BKW}$ equals $BS^\downarrow _n$.
\end{lemma}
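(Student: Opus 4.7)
The plan is to use the fact that on the star $S_n$ the height function $h$ is simply a shift of the Busemann function $\beta_P$, and then translate the Morse-descending condition at $\mathring{y_n}$ directly into the geometric condition defining $BS_n^\downarrow$.

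First I would verify that $h \equiv \beta_P + C^*$ on the whole star $S_n$. Because $y_n$ is a special vertex of a chamber that intersects $\ell_Y$, we have $d(y_n,\ell_Y) \le \tfrac{1}{2} N^*$; since $\mathrm{diam}(S_n) \le \tfrac{1}{2} N^*$ as well, every point of $S_n$ lies within $N^*$ of $\ell_Y$, and the formula of Lemma~\ref{l:yn} yields the identity. Next, for any cell $\tau$ with $y_n \subsetneq \tau \subseteq S_n$, the affineness of $\beta_P$ on cells contained in an apartment gives $\max_{x\in\tau} h(x) = C^* + \max_{v\in\mathrm{vert}(\tau)} \beta_P(v)$, which is at least $h(y_n)$ since $y_n$ is itself a vertex of $\tau$. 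If some vertex $v \ne y_n$ of $\tau$ has $\beta_P(v) > \beta_P(y_n)$, the first coordinate of $f_{BKW}(\mathring{\tau})$ strictly exceeds that of $f_{BKW}(\mathring{y_n})$, so $\mathring{\tau}$ is not Morse-descending; simultaneously, no chamber containing $\tau$ can lie in $S_n^\downarrow$. Otherwise the first coordinates coincide, and because $\dim\tau > 0 = \dim y_n$, the Morse comparison reduces to the depth coordinate.

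The principal technical step is then to invoke the definition of the BKW depth (\cite{B-G-W}, Section 8) and prove: for $\tau \supsetneq y_n$ with $\max_\tau \beta_P = \beta_P(y_n)$, one has $\mathrm{dp}(\tau) < \mathrm{dp}(y_n)$ if and only if $\beta_P(v) < \beta_P(y_n)$ strictly for every vertex $v \ne y_n$ of $\tau$. Intuitively, the depth records the horizontal dimension of a cell, so a vertex $v$ with $\beta_P(v) = \beta_P(y_n)$ contributes a horizontal edge $[y_n,v]$ that forces $\mathrm{dp}(\tau) \ge \mathrm{dp}(y_n)$, while strictly descending vertices contribute no new horizontal directions. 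This depth identity is the main obstacle of the proof; everything else is bookkeeping.

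Assembling the steps, a vertex $\mathring{\tau}$ of $\mathrm{Lk}(\mathring{y_n})$ belongs to the Morse descending link precisely when every vertex $v \ne y_n$ of $\tau$ satisfies $\beta_P(v) < \beta_P(y_n)$; at a special vertex this is in turn equivalent to $\tau$ being a face of some chamber in $S_n^\downarrow$, using that the complement of the closed $\tfrac{\pi}{2}$-ball around the gradient direction in $\partial S_n$ is a pure Coxeter subcomplex. Under the standard identification of $\mathrm{Lk}(\mathring{y_n})$ with the barycentric subdivision of $\partial S_n$, this exhibits the Morse descending link of $\mathring{y_n}$ as the barycentric subdivision of $BS_n^\downarrow$, giving the claimed equality.
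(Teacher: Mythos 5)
Your reduction of the problem is sound as far as it goes: on $S_n$ one has $h=\beta_P+C^*$ by Lemma~\ref{l:yn}, the third coordinate of $f_{BKW}$ is irrelevant because $\dim\tau>\dim y_n$, and so $\mathring{\tau}$ is Morse descending if and only if $\max_{\tau}h=h(y_n)$ and $\mathrm{dp}(\tau)<\mathrm{dp}(y_n)$. But the step you yourself flag as ``the main obstacle'' --- the biconditional asserting that $\mathrm{dp}(\tau)<\mathrm{dp}(y_n)$ exactly when every vertex $v\neq y_n$ of $\tau$ has $\beta_P(v)<\beta_P(y_n)$ --- is left as an assertion supported only by a heuristic, and it is not provable from the definition of depth alone. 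Depth exists in \cite{B-G-W} precisely because descending links are \emph{not} in general governed by height: at a vertex whose link has a nontrivial horizontal part, cells spanned by the vertex and height-level neighbors can still be Morse descending via the depth coordinate, and your biconditional would fail there. What makes it true at $y_n$ is that the horizontal link of $y_n$ is trivial, and that is exactly the input you never supply: it follows from $y_n$ being special (so the join factors of $\partial S_n$ correspond to the factors of $X$) together with Proposition~\ref{p:factores} (nonconstancy of $\beta_P$ on every factor). You invoke speciality only at the very end, for purity of the hemisphere complex, not where it is actually needed.

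The paper closes this gap by citing Proposition 9.6 of \cite{B-G-W}, which decomposes the Morse descending link of a vertex as a join of a horizontal and a vertical descending part; once the horizontal link is shown to be trivial as above, the descending link is the full subcomplex of the link of $y_n$ spanned by the vertices $v$ with $h(v)<h(y_n)$, which Lemma~\ref{l:yn} identifies with $BS^\downarrow_n$. If you want to keep your more hands-on route, you would have to reprove the content of that proposition from the definition of depth in Section 8 of \cite{B-G-W}, and even then you would still need the horizontal-link computation; so the argument should be restructured around those two points.
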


\begin{proof}
As in Section 6 of \cite{B-G-W}, the height function $h$ forces a decomposition of the link of $y_n \in X$ into a join of a ``horizontal link" of $y_n$ and a ``vertical link"  of $y_n$ where the horizontal link of $y_n$ is the join of all factors of the link of $y_n$ whose points are evaluated by $h$ as  $h(y_n)$.

By Lemma~\ref{l:yn}, the restriction of $\beta_{P}$ to the horizontal link of $y_n$ is constant. But $y_n$ is a special vertex, so Proposition~\ref{p:factores} implies that the horizontal link of $y_n$ is trivial, and therefore, that the vertical link of $y_n$ equals the link of $y_n$.

Now by Proposition 9.6 of \cite{B-G-W}, the Morse descending link of $y_n$ is the subcomplex of the link of $y_n$ in $X$ that is spanned by all vertices $v$ in the link of $y_n$ such that $h(v)<h(y_n)$. (Keep in mind that any vertex of $X$ is ``significant".) Again, by Lemma~\ref{l:yn}, this complex is equal to $BS^\downarrow _n$.
\end{proof}

\subsection{Extending local disks near $y_n$}
In addition to the existence of $X_{k-2}$, we shall utilize the results of \cite{B-G-W} to extend ``local" disks near $y_n$ to ``global" disks in $X$. More precisely, we have

\begin{lemma}\label{l:extendstar} Let $\sigma : S^{k-1} \rightarrow X$ be a continuous map of a $(k-1)$-sphere into $X$.  Suppose there is some triple $s > s_0$ such that $\sigma(S^{k-1}) \se \mathring{X}(s)$. Then there is a homotopy $F:S^{k-1} \times [0,1] \rightarrow X$ such that for all $x \in S^{k-1}$ we have $F(x,t)\in \mathring{X}(s)$, $F(x,0)=\sigma(x)$, and $F(x,1)\in  \mathring{X}(s_0)=X_{k-2}$.
\end{lemma}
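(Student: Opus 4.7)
The plan is to prove by induction on $s$ (well-founded because only finitely many values of $f_{BKW}$ lie below any fixed bound) that every continuous $\sigma:S^{k-1}\to \mathring{X}(s)$ admits a homotopy inside $\mathring{X}(s)$ from $\sigma$ to a map with image in $\mathring{X}(s_0)=X_{k-2}$. The base case $s=s_0$ is trivial via the constant homotopy.

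For the inductive step, let $s^-$ be the largest value of $f_{BKW}$ strictly less than $s$. The goal reduces to producing a homotopy, inside $\mathring{X}(s)$, from $\sigma$ to a map whose image lies in $\mathring{X}(s^-)$: applying the inductive hypothesis inside $\mathring{X}(s^-) \se \mathring{X}(s)$ to that map, then concatenating, yields the desired $F$. To execute the one-level push, decompose $\mathring{X}(s)=\mathring{X}(s^-)\cup\bigcup_\alpha \overline{{\rm St}}(\mathring{\tau}_\alpha)$, where $\mathring{\tau}_\alpha$ ranges over the barycenters with $f_{BKW}(\mathring{\tau}_\alpha)=s$. Each closed star $\overline{{\rm St}}(\mathring{\tau}_\alpha)$ in $\mathring{X}$ is a cone on ${\rm Lk}(\mathring{\tau}_\alpha)$ (and is therefore contractible), and its intersection with $\mathring{X}(s^-)$ consists of precisely those simplices of ${\rm Lk}(\mathring{\tau}_\alpha)$ whose vertices all satisfy $f_{BKW}(v)<f_{BKW}(\mathring{\tau}_\alpha)$; that is, the Morse descending link of $\mathring{\tau}_\alpha$. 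By the Bux--K\"{o}hl--Witzel/Schulz analysis (Lemma~\ref{l:Schulz} together with the discussion recalled in Section~\ref{s:morse}), this descending link is either contractible or homotopy equivalent to a wedge of $(k-1)$-spheres. Consequently the pair $\bigl(\overline{{\rm St}}(\mathring{\tau}_\alpha),\,\overline{{\rm St}}(\mathring{\tau}_\alpha)\cap \mathring{X}(s^-)\bigr)$ is equivalent, relative to $\mathring{X}(s^-)$, either to the trivial attachment or to the attachment of a wedge of $k$-cells along the descending link.

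Since $\sigma(S^{k-1})$ is compact, it meets only finitely many of the $\overline{{\rm St}}(\mathring{\tau}_\alpha)$. Working on one such closed star at a time and using cellular approximation together with $\dim S^{k-1}=k-1<k$, we homotope $\sigma$ off the interiors of the attached $k$-cells by homotopies supported in $\overline{{\rm St}}(\mathring{\tau}_\alpha)\se \mathring{X}(s)$; in the contractible case there is nothing to avoid and the deformation retract of $\overline{{\rm St}}(\mathring{\tau}_\alpha)$ onto $\overline{{\rm St}}(\mathring{\tau}_\alpha)\cap \mathring{X}(s^-)$ suffices. Concatenating these finitely many local homotopies produces a homotopy inside $\mathring{X}(s)$ that moves $\sigma$ into a map with image in $\mathring{X}(s^-)$, and the inductive hypothesis then finishes the argument.

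The principal obstacle is the bookkeeping needed to realize the homotopy equivalence of pairs that converts the attachment of $\overline{{\rm St}}(\mathring{\tau}_\alpha)$ into a genuine $k$-cell attachment rel $\mathring{X}(s^-)$, and to guarantee that the chosen local homotopies can be performed independently on the different $\mathring{\tau}_\alpha$ without the intermediate stages leaving $\mathring{X}(s)$. Both are built into the Morse-theoretic framework of \cite{B-G-W} by the careful design of the descending-link decomposition and the cocompactness of each $\mathring{X}(t)$, and no input beyond what was recalled in Section~\ref{s:morse} is needed.
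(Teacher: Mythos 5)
Your argument is correct, but it is organized differently from the one in the paper. You induct on the Morse level $s$ of the target, pushing $\sigma$ down one step of the filtration $\mathring{X}(s_0)\se\cdots\se\mathring{X}(s)$ at a time via local homotopies supported in the closed stars of the barycenters at the top level, using the Schulz/Bux--K\"{o}hl--Witzel description of descending links together with cellular approximation to slide the $(k-1)$-dimensional image off the attached $k$-cells. The paper instead takes as its starting point the global consequence of that same descending-link analysis already recorded in Section~\ref{s:morse} --- that, up to homotopy, $\mathring{X}(s)$ is obtained from $X_{k-2}=\mathring{X}(s_0)$ by attaching cells of dimension $k$ only, so the pair $\bigl(\mathring{X}(s),X_{k-2}\bigr)$ is $(k-1)$-connected --- and then compresses $\sigma$ into $X_{k-2}$ by induction on the skeleta of $S^{k-1}$: join the images of the $0$-cells to $X_{k-2}$ by paths in $\mathring{X}(s)$, homotope each image of a $1$-cell capped off by those paths rel its endpoints into $X_{k-2}$, and continue through the $(k-1)$-skeleton. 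Your route essentially re-proves the Morse Lemma one level at a time and therefore carries the extra bookkeeping you acknowledge (independence of the stars at a fixed level, transporting the map through the homotopy equivalence that realizes the cell attachment), whereas the paper's route buries all of that in the single statement that only $k$-cells are attached and replaces it with the routine skeletal compression of a $(k-1)$-complex into a $(k-1)$-connected pair. Both arguments consume exactly the same input from \cite{B-G-W} and \cite{Sc}, so neither is more general; the paper's is shorter, while yours makes the Morse-theoretic mechanism explicit.
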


\begin{proof}
Let $c_1^0, \ldots,c_m^0 \se X$ be the image under $\sigma$ of the $0$-cells of $S^{k-1}$. Let $c^0_{i,F} \se \mathring{X}(s)$ be paths from $c_i^0$ to $X_{k-2}$. The boundary of each $c^0_{i,F}$ is $c_i^0$ and $b_i^0$ for some $b_i^0 \in X_{k-2}$.

If $k=1$, then $m= 2$, and $c^0_{1,F} \cup c^0_{2,F}$ is the image of the homotopy $F$.

If $k \geq 2$, then let $c_i^1 \se \sigma( S^{k-1})$ be the image of the 1-cell with boundary $c_\ell^0$ and $c_j^0$. Since $\mathring{X}(s)$ is obtained from $X_{k-2}$ by attaching $k$-cells, there is a homotopy relative $b_\ell^0$ and $b_j^0$ between $c_i^1 \cup c^0_{\ell,F}\cup c^0_{j,F}$ and a 1-cell $b_i^1 \se X_{k-2}$. We name the image of this homotopy $c_{i,F}^1$.

If $k=2$, then the union of the $c^1_{i,F}$ defines the homotopy $F$.

If $k \geq 3$, then we proceed as above by induction on the skeleta of $S^{k-1}$.
\end{proof}

We let $I_n= S_n - \partial S_n$ be the interior of $S_n$. As a consequence of the above lemma, we have 

\begin{corollary}\label{c:extend} 
For  $n \gg 0$, there is a $k$-disk $D^k_n \se S^\downarrow _n \cup (X-{\bf G}(\mathcal{O}_S) I_n)$ with $\partial D_n^k \se X_{k-2}$ and such that $D^k_n \cap S^\downarrow _n$ is a $k$-disk that represents a noncontractible $k$-sphere in the quotient space $S^\downarrow _n /BS^\downarrow _n$.
\end{corollary}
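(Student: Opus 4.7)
The plan is to build $D_n^k$ as an inner $k$-disk $D_{\mathrm{in}} \subseteq S_n^\downarrow$ glued along its boundary to an outer cylindrical collar that carries that boundary out to $X_{k-2}$. First I would apply Lemma~\ref{l:Schulz} to pick $\sigma_n : S^{k-1} \to BS_n^\downarrow$ representing a nontrivial class in $\tilde H_{k-1}(BS_n^\downarrow)$. Because every chamber of $S_n^\downarrow$ contains $y_n$ as a vertex, $S_n^\downarrow$ is the straight-line cone over $BS_n^\downarrow$ with apex $y_n$; I take $D_{\mathrm{in}}$ to be the cone on $\sigma_n$ with apex $y_n$. Under the pair isomorphism $\tilde H_k(S_n^\downarrow / BS_n^\downarrow) \cong H_k(S_n^\downarrow, BS_n^\downarrow) \cong \tilde H_{k-1}(BS_n^\downarrow)$ (which exists since the cone $S_n^\downarrow$ is contractible), the class of $D_{\mathrm{in}}$ in the quotient is identified with $[\sigma_n]$ and is therefore a noncontractible $k$-sphere.

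For the outer collar, set $s_n = f_{BKW}(y_n)$, and first observe that $BS_n^\downarrow \subseteq \mathring{X}(s_n - 1)$: each cell $\tau$ of $BS_n^\downarrow$ is a face of a chamber of $S_n^\downarrow$ not containing $y_n$, so all its vertices satisfy $h(v) < h(y_n)$, whence $f_{BKW}(\mathring{\tau}) < s_n$. For $n$ large enough that $s_n - 1 > s_0$, Lemma~\ref{l:extendstar} supplies a homotopy $F : S^{k-1} \times [0,1] \to \mathring{X}(s_n - 1)$ with $F(\cdot, 0) = \sigma_n$ and $F(\cdot, 1) \subseteq X_{k-2}$. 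I take $D_n^k := D_{\mathrm{in}} \cup F(S^{k-1} \times [0,1])$, glued along $\sigma_n(S^{k-1})$; this is a $k$-disk with $\partial D_n^k \subseteq X_{k-2}$.

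The remaining work is to verify the containment $D_n^k \subseteq S_n^\downarrow \cup (X - \mathbf{G}(\mathcal{O}_S) I_n)$ together with $D_n^k \cap S_n^\downarrow = D_{\mathrm{in}}$, both of which concern the outer collar. The key input is that $\mathbf{G}(\mathcal{O}_S)$-invariance of $f_{BKW}$ yields $f_{BKW}(g y_n) = s_n$ for every $g \in \mathbf{G}(\mathcal{O}_S)$, so no orbit vertex $g y_n$ lies in $\mathring{X}(s_n - 1)$; in particular, no barycentric simplex in the image of $F$ has $g y_n$ as a vertex, so $\mathrm{image}(F)$ is disjoint from the open stars of every $g y_n$ taken in $\mathring{X}$. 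The only remaining intersection of $\mathrm{image}(F)$ with $\mathbf{G}(\mathcal{O}_S) I_n$ lies in the ``far from apex'' portions of open cells of $X$ through the various $g y_n$, and would be cleared by a generic perturbation of $F$ rel endpoints within $\mathring{X}(s_n - 1)$: the $g y_n = y_n$ contribution is absorbed into $D_{\mathrm{in}} \subseteq S_n^\downarrow$, while the $g y_n \neq y_n$ contributions (which live in $\mathring{X}(s_n - 1) \setminus S_n^\downarrow$) are pushed off entirely. Carrying out this perturbation cleanly using the barycentric structure of $\mathring{X}(s_n - 1)$ and the $\mathbf{G}(\mathcal{O}_S)$-equivariance of the Morse data is the main technical obstacle; once it is achieved, $D_n^k \cap S_n^\downarrow = D_{\mathrm{in}}$ and the noncontractibility in $S_n^\downarrow / BS_n^\downarrow$ established in the first paragraph finishes the proof.
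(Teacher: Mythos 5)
Your construction coincides with the paper's: cone the Schulz sphere $\sigma_n\subseteq BS_n^\downarrow$ to $y_n$ inside $S_n^\downarrow$, note $BS_n^\downarrow\subseteq\mathring{X}(s_n-1)$, and extend to $X_{k-2}$ by the homotopy $F$ of Lemma~\ref{l:extendstar} with image in $\mathring{X}(s_n-1)$; the identification of the class of the cone in $S_n^\downarrow/BS_n^\downarrow$ with $[\sigma_n]$ is also the paper's argument (phrased there via suspensions). The gap is at the end: you leave the containment $\mathrm{image}(F)\subseteq X-{\bf G}(\mathcal{O}_S)I_n$ unproved, flagging a ``generic perturbation'' of $F$ as the main remaining obstacle. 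That perturbation is not needed, and since you never carry it out, your argument is incomplete exactly where the paper's is a one-line observation.

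The missing point is that ${\bf G}(\mathcal{O}_S)I_n$ is disjoint from \emph{all} of $\mathring{X}(s_n-1)$, not merely from the orbit vertices $gy_n$. Indeed $I_n=S_n-\partial S_n$ is the union of the open cells $\tau$ of $S_n$ with $y_n\in\bar\tau$, and each such $\tau$ satisfies $f_{BKW}(\mathring{\tau})\geq s_n$ (its first coordinate is $\max_{x\in\tau}h(x)\geq h(y_n)$ --- note this is a maximum over all points of $\tau$, not only its vertices); by ${\bf G}(\mathcal{O}_S)$-invariance the same holds for every $g\mathring{\tau}$. Now any point of the open cell $g\tau$ lies in the interior of a simplex of $\mathring{X}$ given by a chain of faces whose top element is $g\tau$, hence a simplex having $g\mathring{\tau}$ as a vertex; since $\mathring{X}(s_n-1)$ is the full subcomplex spanned by barycenters of $f_{BKW}$-value at most $s_n-1$, that simplex, and therefore that point, is not in $\mathring{X}(s_n-1)$. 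So the ``far from apex'' portions of these open cells that worried you are already excluded: your mistake was to control only the barycentric simplices having $gy_n$ itself as a vertex rather than those having $g\mathring{\tau}$ as a vertex for every cell $\tau$ of $I_n$. With this, $\mathrm{image}(F)\subseteq\mathring{X}(s_n-1)\subseteq X-{\bf G}(\mathcal{O}_S)I_n$ comes for free from Lemma~\ref{l:extendstar}, and $\mathrm{image}(F)\cap S_n^\downarrow\subseteq S_n^\downarrow-I_n\subseteq BS_n^\downarrow$ is collapsed in $S_n^\downarrow/BS_n^\downarrow$, so the inner cone alone carries the nontrivial class and the proof closes.
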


\begin{proof}
Let $s_n$ be the triple such that $f_{BKW}(y_n)=s_n$. By Lemma~\ref{l:yn}, and the definition of the Morse function $f_{BKW}$, we have for any cell $\tau \se S_n$ that is not contained in $\partial S_n$ that $f_{BKW}({\bf G}(\mathcal{O}_S) \tau)=f_{BKW}(\tau)\geq s_n$ since $y_n \in \tau$. That is, ${\bf G}(\mathcal{O}_S) I_n \cap  \mathring{X}(s_n-1)=\emptyset$.

By Lemmas~\ref{l:Schulz} and~\ref{l:same}, there is a noncontractible $(k-1)$-sphere $\sigma^{k-1}_n \se 
B S_n^\downarrow$. We let $d^k_n \se S_n^\downarrow$ be the cone at $y_n \in S_n^\downarrow$ on 
 $$\sigma^{k-1}_n \se B S_n^\downarrow \se \mathring{X}(s_n-1)$$

By Lemma~\ref{l:extendstar}, there is a homotopy $F$ between $\partial d^k_n$ and a $(k-1)$-sphere in $X_{k-2}$ whose image is contained in $\mathring{X}(s_n-1)$. We let $D^k_n$ be the union of $d^k_n$ and $F$. Then $$D^k_n \se S^\downarrow _n \cup \mathring{X}(s_n-1) \se S^\downarrow _n \cup (X-{\bf G}(\mathcal{O}_S) I_n)$$

That $D^k_n \cap S^\downarrow _n=d^k_n$ represents a noncontractible $k$-sphere in $S^\downarrow _n /BS^\downarrow _n$ follows from the natural identification of $d^k_n / \partial d^k_n$ and  $S^\downarrow _n /BS^\downarrow _n$ with the suspensions of $\sigma^{k-1}_n$ and 
$BS^\downarrow _n$ respectively.
\end{proof}

\begin{lemma} \label{l:pixar}
Suppose that $\mathfrak{C}_a, \mathfrak{C}_b \se S^\downarrow _n$ are chambers in $X$, and that there is some  $\gamma  \in {\bf G}(\mathcal{O}_S) $ such that $\gamma \mathfrak{C}_a= \mathfrak{C}_b$. Then $\gamma y_n=y_n$.
 \end{lemma}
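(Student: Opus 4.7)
The plan is to exploit the fact that the height function $h$ is $\mathbf{G}(\mathcal{O}_S)$-invariant while on a neighborhood of $\ell_Y$ it agrees (up to a constant) with $\beta_P$, and then use that descending chambers have $y_n$ as the unique $\beta_P$-maximizer.

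\textbf{Step 1 (Locating $\gamma^{-1}y_n$).} Since $\mathfrak{C}_b\subseteq S_n^\downarrow$, in particular $y_n\in\mathfrak{C}_b=\gamma\mathfrak{C}_a$, so $\gamma^{-1}y_n\in\mathfrak{C}_a\subseteq S_n$.

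\textbf{Step 2 (The star $S_n$ lies in the neighborhood).} I would check that $S_n$ lies inside the $N^*$-neighborhood of $U\ell_Y$. By Lemma~\ref{l:yn}, $y_n$ is contained in a chamber of $\Sigma$ meeting $\ell_Y$, so $y_n$ is within one chamber diameter of $\ell_Y$; any point in $S_n$ is within one further chamber diameter of $y_n$. Since $N^*$ is defined as twice the maximum diameter of a star in $X$ (which exceeds the maximum chamber diameter), every point of $S_n$ is within $N^*$ of $\ell_Y\subseteq U\ell_Y$. Therefore, by Lemma~\ref{l:yn}, for every $z\in S_n$ one has $h(z)=\beta_P(z)+C^*$.

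\textbf{Step 3 (Applying invariance of $h$).} Applying Step 2 to both $y_n\in S_n$ and $\gamma^{-1}y_n\in\mathfrak{C}_a\subseteq S_n$, together with the $\mathbf{G}(\mathcal{O}_S)$-invariance of $h$ recorded in Section~\ref{s:h}, gives
\[
\beta_P(\gamma^{-1}y_n)+C^* \;=\; h(\gamma^{-1}y_n) \;=\; h(y_n) \;=\; \beta_P(y_n)+C^*,
\]
so $\beta_P(\gamma^{-1}y_n)=\beta_P(y_n)$.

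\textbf{Step 4 (Uniqueness of the maximizer).} Finally, by the definition of $S_n^\downarrow$, the only point of the chamber $\mathfrak{C}_a$ on which $\beta_P$ attains the value $\beta_P(y_n)$ is $y_n$ itself. Combined with $\gamma^{-1}y_n\in\mathfrak{C}_a$, this forces $\gamma^{-1}y_n=y_n$, i.e.\ $\gamma y_n=y_n$, as required.

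I do not anticipate a real obstacle here; the only subtlety is the verification in Step 2 that $S_n$ is actually inside the $N^*$-neighborhood of $U\ell_Y$, which is precisely the reason $N^*$ was chosen to be twice the maximum star diameter in the statement of Lemma~\ref{l:yn}.
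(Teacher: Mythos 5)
Your proof is correct and is essentially the same as the paper's: the paper invokes the $\mathbf{G}(\mathcal{O}_S)$-invariance of the Morse function $f_{BKW}$ (whose leading coordinate is the height $h$) together with the fact that $y_n$ is the unique vertex of any chamber of $S_n^\downarrow$ achieving the top value, which is precisely your combination of $h$-invariance with the strict $\beta_P$-maximality of $y_n$ built into the definition of $S_n^\downarrow$. Your Step 2, verifying that $S_n$ lies in the $N^*$-neighborhood of $U\ell_Y$ so that $h=\beta_P+C^*$ there, is the detail the paper leaves implicit, and it is indeed the reason $N^*$ was taken to be twice the maximal star diameter.
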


\begin{proof}
The vertex $y_n$ is the only vertex of any chamber in $S^\downarrow _n$ with $f_{BKW}(v)=f_{BKW}(y_n)$. Since $f_{BKW}$ is ${\bf G}(\mathcal{O}_S)$ invariant, we have for $\gamma y_n \in \mathfrak{C}_b$ that $f_{BKW}(\gamma y_n)=f_{BKW}(y_n)$ so that $\gamma y_n=y_n$.
\end{proof}

 \section{Construction of a $k$-connected ${\bf G}(\mathcal{O}_S)$-complex}\label{s:space}
Bux-K\"{o}hl-Witzel gives us a $(k-2)$-connected complex that ${\bf G}(\mathcal{O}_S)$ acts on properly and cocompactly, namely $X_{k-2}$. In order to determine the cohomology of finite-index subgroups of ${\bf G}(\mathcal{O}_S)$ in dimension $k$, we will create a $k$-connected space that ${\bf G}(\mathcal{O}_S)$ acts on. In this section we will construct such a space by attaching $k$-cells to $X_{k-2}$ and then attaching $(k+1)$-cells after that.

\subsection{Construction of $X_{k}$.}

We let $\psi:X_{k-2} \rightarrow X$ be the inclusion. In the process of our construction of a $k$-connected space that contains $X_{k-2}$, we will be extending $\psi$ to a map from that $k$-connected space into $X$.

Let $\sigma : S^{k-1}  \rightarrow X_{k-2}$ be a continuous map of a $(k-1)$-sphere into the $(k-1)$-skeleton of $X_{k-2}$. We regard $\sigma$ as an attaching map for a $k$-cell that we name $D^k_{1,\sigma}$.

For each nontrivial $\gamma \in {\bf G}(\mathcal{O}_S)$, we attach another $k$-cell $D^k_{\gamma,\sigma}$ to $X_{k-2}$ using the attaching map $\gamma \circ \sigma$. We assign a homeomorphism $\gamma : D^k_{1,\sigma} \rightarrow D^k_{\gamma,\sigma}$ that restricts to the $\gamma$-action on $\partial D^k_{1,\sigma}, \partial D^k_{\gamma,\sigma} \se X_{k-2}$. Then for any  $\lambda \in {\bf G}(\mathcal{O}_S)$, we let $$\lambda : D^k_{\gamma,\sigma} \rightarrow  D^k_{\lambda \gamma,\sigma}$$ be the homeomorphism defined by $\lambda = (\lambda \gamma)\gamma ^{-1}$. In this way, we have defined a ${\bf G}(\mathcal{O}_S)$-action on the complex 
 $$ X_{k-2} \cup \bigcup_{\gamma \in {\bf G}(\mathcal{O}_S) }D^k_{\gamma,\sigma} $$

We repeat the process above for every continuous $\sigma : S^{k-1}  \rightarrow X_{k-2}$ with image in the $(k-1)$-skeleton of $X_{k-2}$. The resulting union of $X_{k-2}$ with the union of every $D^k_{\gamma,\sigma}$ for every pair of $\gamma$ and $\sigma$ is a $k$-complex that we will denote by $X_{k-1}$. Notice that $X_{k-1}$ is a $(k-1)$-connected,  ${\bf G}(\mathcal{O}_S)$-complex. The group  ${\bf G}(\mathcal{O}_S)$ will not in general act freely on $X_{k-1}$, but any nontrivial point stabilizers correspond to points in $X_{k-2}$ since the interiors of each of the $D^k_{\gamma,\sigma}$ are disjoint.

We extend $\psi$ to each $D^k_{\gamma,\sigma}$ --- and thus to all of $X_{k-1}$ --- by assigning arbitrary continuous maps $\psi : D^k_{1,\sigma} \rightarrow X$  that agree with $\psi$ on $\partial D^k_{1,\sigma} \se X_{k-2}$ and then by defining $\psi : D^k_{\gamma,\sigma} \rightarrow X$ as $\gamma \circ \psi \circ \gamma ^{-1}$. Notice that $\gamma \circ \psi = \psi \circ \gamma$ so that $\psi$ is ${\bf G}(\mathcal{O}_S)$-equivariant.

Now repeat the above process, this time attaching $(k+1)$-cells $D^{k+1}_{\gamma,\sigma}$ to $X_{k-1}$ with attaching maps $\sigma : S^{k}  \rightarrow X_{k-1}$
 to obtain a $k$-connected complex $X_k$ that ${\bf G}(\mathcal{O}_S)$ acts on with a ${\bf G}(\mathcal{O}_S)$-equivariant map $\psi :X_k \rightarrow X$ that restricts to $X_{k-2}\se X$ as the inclusion map. The action of ${\bf G}(\mathcal{O}_S)$ on $X_{k} - X_{k-2}$ is free.

\section{Assigning attaching disks to cycles in a finite complex}
In this section we will begin to focus some attention on a given finite-index subgroup $\Gamma$ of ${\bf G}(\mathcal{O}_S)$ from the statement of our main result, Theorem~\ref{t:mt}. That is, we let $\Gamma$ be any finite-index subgroup of ${\bf G}(\mathcal{O}_S)$ that is residually $p$-finite.

Our goal in proving our main result is to show that  $\emph{H}^k(\Gamma \backslash X_k;\mathbb{F}_p)$ is infinite. In the penultimate section of this paper we explain why this implies that $ \emph{H}^k(\Gamma ;\mathbb{F}_p)$ is infinite.

\subsection{Definition of $\Gamma _n$} Our proof of our main result relies on forming a sequence of finite quotients of the group $\Gamma$. These quotients are described in the following

\begin{lemma}\label{l:ru4}
For any $n \geq 0$, there is a normal subgroup $\Gamma _n  \unlhd \Gamma $ such that  $\Gamma / \Gamma _n$ is a finite $p$-group and $\Gamma _n $ acts cocompactly and freely on $\Gamma S_{n}$.
\end{lemma}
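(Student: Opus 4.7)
The plan is to exploit the residual $p$-finiteness of $\Gamma$ to construct $\Gamma_n$ as a finite intersection of normal subgroups of $\Gamma$ having $p$-power index, each chosen to avoid one particular nontrivial element in a cell stabilizer inside $S_n$. Cocompactness will then follow for free from $[\Gamma:\Gamma_n]<\infty$, and freeness from the way finite stabilizers interact with normal subgroups.

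First I would note that $S_n = {\rm St}(y_n)$ is a finite subcomplex of $X$, so it contains only finitely many cells. Since $\Gamma$ has finite index in the lattice ${\bf G}(\mathcal{O}_S)$, it acts properly on $X$, so every cell stabilizer $\Gamma_\tau$ (for $\tau \subseteq S_n$) is finite. Because $\Gamma$ is residually $p$-finite, every torsion element of $\Gamma$ has order a power of $p$; in particular each nontrivial element of each $\Gamma_\tau$ can be separated from $1$ in some finite $p$-group quotient of $\Gamma$. Let $F = \bigcup_{\tau \subseteq S_n}(\Gamma_\tau \setminus \{1\})$; this is a finite subset of $\Gamma$.

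For each $g \in F$, pick a normal subgroup $N_g \unlhd \Gamma$ with $\Gamma/N_g$ a finite $p$-group and $g \notin N_g$, and set $\Gamma_n = \bigcap_{g \in F} N_g$. Then $\Gamma/\Gamma_n$ embeds into the finite $p$-group $\prod_{g \in F} \Gamma/N_g$, so $\Gamma/\Gamma_n$ is itself a finite $p$-group, and by construction $\Gamma_n \cap \Gamma_\tau = \{1\}$ for every cell $\tau \subseteq S_n$. To verify freeness on $\Gamma S_n$: any point of $\Gamma S_n$ lies in the interior of a cell of the form $\gamma \tau$ with $\tau \subseteq S_n$, so its $\Gamma$-stabilizer is $\gamma \Gamma_\tau \gamma^{-1}$; if $\alpha \in \Gamma_n$ fixes this point, normality gives $\gamma^{-1}\alpha\gamma \in \Gamma_n \cap \Gamma_\tau = \{1\}$, so $\alpha = 1$. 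Cocompactness is immediate: $\Gamma \backslash \Gamma S_n$ is a continuous image of the compact complex $S_n$, and the map $\Gamma_n \backslash \Gamma S_n \to \Gamma \backslash \Gamma S_n$ is a finite-degree covering, so the source is also compact. The only nontrivial ingredient is recording that elements of the finite cell stabilizers are $p$-torsion, which is immediate from residual $p$-finiteness; the rest is routine bookkeeping.
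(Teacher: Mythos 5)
Your proposal is correct and follows essentially the same route as the paper: separate the finitely many nontrivial elements of the cell stabilizers in $S_n$ via finite $p$-group quotients, take $\Gamma_n$ to be the (kernel of the product map, equivalently the intersection of the kernels), and use normality to conjugate any offending stabilizing element back into a cell stabilizer of $S_n$ itself. The paper's proof is the same argument phrased with a single product homomorphism rather than an intersection of the $N_g$.
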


\begin{proof}
The group $\Gamma$ acts cocompactly on $\Gamma S_n$.

For any cell $\tau \se S_n$, let $\Gamma_\tau$ be the finite stabilizer of $\tau$ in $\Gamma$, and let $Z_n \se \Gamma$ be the finite set of the union of $\Gamma _\tau$ over the finite set of cells $\tau \se S_n$. 

Since $\Gamma$ is residually $p$-finite, there is for each nontrivial $\gamma \in Z_n$ a finite $p$-group, $G_\gamma$, and a homomorphism $\phi _\gamma : \Gamma \rightarrow G_\gamma$ such that $\phi_\gamma (\gamma)\neq 1$. Now let $\phi : \Gamma \rightarrow \prod_\gamma G_\gamma$ be the product of the $\phi_\gamma$, and let $\Gamma _n$ be the kernel of $\phi$. Then $\Gamma _n \unlhd \Gamma$,  $\Gamma / \Gamma _n$ is a finite $p$-group, and $Z_n \cap \Gamma _n =\{1\}$.

Since $\Gamma_n$ is finite-index in $\Gamma$, it acts cocompactly on $\Gamma S_n$. Furthermore, if $\gamma \in \Gamma _n$ and $\gamma g \tau =g \tau$ for some $g \in \Gamma$ and some cell $\tau \se S_n$, then $g^{-1} \gamma g \in \Gamma _n$ is contained in $\Gamma _\tau \se Z_n$, and thus  $g^{-1} \gamma g$, and hence $\gamma$, is trivial. \end{proof}

\subsection{Definition of $\theta_n$} We define $$\theta _n : X \rightarrow \Gamma_n \backslash X$$
to be the quotient map.  Notice that $\Gamma $ acts on $\Gamma_n \backslash X$ since $\Gamma_n$ is normal in $\Gamma $. Furthermore,  $\theta _n$ is   $\Gamma$-equivariant.

Also note that $\Gamma$ acts on the pair  $( X, X-\Gamma I_n)$ and thus on 
the pair $( \theta_n(X), \theta_n(X-\Gamma I_n))$, and therefore on the homologies of these pairs. (All homologies of complexes in this paper are cellular.)

\subsection{Definition of  $\Theta _n ( D_{\gamma,\sigma}^k)$}
Given a  $k$-cell $D_{\gamma,\sigma}^k$ attached to $X_{k-2}$ in the construction of $X_k$, 
we have that $\psi(\partial D_{\gamma,\sigma}^k) \se X_{k-2}$.

By Lemma~\ref{l:yn}, the sequence of $h(y_n)$, and hence of $f_{BKW}(\Gamma y_n)$ is unbounded. Thus we may assume that  $X_{k-2}$ intersects each $\Gamma S_n$ trivially, which implies  $\partial \psi( D_{\gamma,\sigma}^k) \se X-\Gamma I_n$ and thus that $\psi( D_{\gamma,\sigma}^k) $ represents a class in the homology group $H_k( X, X-\Gamma I_n ; \mathbb{F}_p)$, and further, that $\theta_n \circ \psi ( D_{\gamma,\sigma}^k) $ represents a class in the homology group $H_k( \theta_n(X),\theta_n( X-\Gamma I_n) ; \mathbb{F}_p)$. In the remainder we shall let 
 $$\Theta _n ( D_{\gamma,\sigma}^k)= [\theta _n \circ \psi (D_{\gamma,\sigma}^k)] \in H_k( \theta_n(X),\theta_n( X-\Gamma I_n) ; \mathbb{F}_p)$$
 
Recall that $\psi$ is $\Gamma$-equivariant, and that $\theta_n$ is $\Gamma$-equivariant. Therefore, the group $\Gamma $ acts on the set of all $\Theta _n ( D_{\gamma,\sigma}^k)$ by the rule that 
 if $g \in \Gamma $, then 
\begin{align*}g \Theta_n ( D_{\gamma,\sigma}^k)&= g [\theta _n \circ \psi (D_{\gamma,\sigma}^k)] \\ & =[\theta _n \circ \psi (g D_{\gamma,\sigma}^k)] \\ &=[\theta _n \circ \psi ( D_{g\gamma,\sigma}^k) ] \\ &=\Theta _n ( D_{g\gamma,\sigma}^k)  \end{align*}

\subsection{Definition of  $W_n$} We let $W_n$ be the vector subspace of \newline $H_k( \theta_n(X),\theta_n( X-\Gamma I_n) ; \mathbb{F}_p)$ generated by the classes $\Theta _n ( D_{\gamma,\sigma}^k)$ for every pair $\gamma$ and $\sigma$.

By the above, the $\Gamma$-action on $H_k( \theta_n(X),\theta_n( X-\Gamma I_n) ; \mathbb{F}_p)$ restricts to a $\Gamma$-action on $W_n$. Since $\Gamma_n$ acts trivially on $\theta_n(X)$, the action of $\Gamma$ on $W_n$ factors through the finite $p$-group $\Gamma / \Gamma_n$.

\begin{lemma}\label{l:ducks}
The vector space $W_n$ is finite-dimensional and nonzero.
\end{lemma}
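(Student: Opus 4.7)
The plan is to apply excision near $\Gamma I_n$, reducing the ambient group to the $H_k$ of a finite CW pair, and then to exhibit an explicit nonzero class in $W_n$ coming from the disk of Corollary~\ref{c:extend}.

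For finite-dimensionality, observe that $\Gamma I_n$ is $\Gamma_n$-invariant, so $\theta_n^{-1}(\theta_n(\Gamma I_n)) = \Gamma I_n$ and excision yields
$$H_k\bigl(\theta_n(X),\, \theta_n(X-\Gamma I_n);\, \mathbb{F}_p\bigr) \;\cong\; H_k\bigl(\theta_n(\Gamma S_n),\, \theta_n(\Gamma \partial S_n);\, \mathbb{F}_p\bigr).$$
By Lemma~\ref{l:ru4}, $\Gamma_n$ acts freely and cocompactly on $\Gamma S_n$, so $\theta_n(\Gamma S_n)$ is a finite $k$-dimensional CW complex and its relative $H_k$ is a finite-dimensional $\mathbb{F}_p$-vector space. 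Since $W_n$ is a subspace, $W_n$ is finite-dimensional.

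For non-vanishing, let $D^k_n$ be the disk from Corollary~\ref{c:extend}, with $d^k_n := D^k_n \cap S^\downarrow_n$ the cone at $y_n$ on a noncontractible $(k-1)$-sphere $\sigma^{k-1}_n \subseteq BS^\downarrow_n$, and with $D^k_n - d^k_n \subseteq \mathring{X}(s_n-1) \subseteq X - \Gamma I_n$. After cellularly approximating $\partial D^k_n$ inside $X - \Gamma I_n$ (which does not change the relative class), set $\sigma_n = \partial D^k_n$ and, in the construction of $X_k$, choose the extension $\psi: D^k_{1,\sigma_n} \to X$ so that it parameterizes $D^k_n$. Then $\Theta_n(D^k_{1,\sigma_n}) = [\theta_n(D^k_n)]$, and under the excision above this corresponds to $[\theta_n(d^k_n)] \in H_k(\theta_n(\Gamma S_n), \theta_n(\Gamma \partial S_n); \mathbb{F}_p)$, because $D^k_n - d^k_n$ is swept into the excised subcomplex.

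Finally, I would verify at the cellular chain level that $\theta_n(d^k_n)$ is a nonzero relative cycle. Its boundary is $\pm\sigma^{k-1}_n \subseteq \partial S_n$, so it is a relative cycle. For non-cancellation, the key point is that $\theta_n$ is injective on the chambers composing $d^k_n$: if $\gamma \in \Gamma_n$ satisfies $\gamma \mathfrak{C}_a = \mathfrak{C}_b$ for chambers $\mathfrak{C}_a, \mathfrak{C}_b \subseteq d^k_n \subseteq S^\downarrow_n$, then Lemma~\ref{l:pixar} forces $\gamma y_n = y_n$, and the freeness from Lemma~\ref{l:ru4} applied to the vertex $y_n$ then forces $\gamma = 1$, so $\mathfrak{C}_a = \mathfrak{C}_b$. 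Thus $\theta_n(d^k_n)$ is a nonzero sum of distinct chambers, and since $\theta_n(\Gamma S_n)$ has no $(k+1)$-cells, any nonzero $k$-cycle represents a nontrivial homology class, yielding $W_n \neq 0$. The main obstacle is exactly this last chain-level step, where Lemmas~\ref{l:pixar} and~\ref{l:ru4} must be combined to rule out cancellation among the chambers of $d^k_n$ after passage to the $\Gamma_n$-quotient.
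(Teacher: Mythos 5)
Your proof is correct and follows essentially the same route as the paper: finite-dimensionality from the finiteness of $\theta_n(\Gamma S_n)$ (you phrase it via excision, the paper via the surjection of $\Gamma S_n$ onto $\theta_n(X)/\theta_n(X-\Gamma I_n)$, which is the same point), and non-vanishing by reducing to the distinctness of chambers of $S^\downarrow_n$ in the $\Gamma_n$-quotient via Lemmas~\ref{l:pixar} and~\ref{l:ru4}. The only cosmetic difference is that you arrange $\psi(D^k_{1,\sigma_n})=D^k_n$ by choice of the extension, whereas the paper observes this is automatic: since $X$ is contractible and $k$-dimensional, any two $k$-chains with boundary $\sigma_n$ coincide.
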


\begin{proof}
The space $X$ is the union of $\Gamma S_n$ and $X-\Gamma I_n$, so $\Gamma S_n$ surjects via $\theta_n$ onto the quotient $\theta_n(X)/\theta_n( X-\Gamma I_n)$. Lemma~\ref{l:ru4} gives us that $\theta_n(\Gamma S_n)$ is a finite complex, and thus,  $\theta_n(X)/\theta_n( X-\Gamma I_n)$ is finite.
The
finite dimensionality of $W_n$ now follows from the finite dimensionality of $H_k( \theta_n(X),\theta_n( X-\Gamma I_n) ; \mathbb{F}_p)$.

Let $D_n^k \se X$ be as in Corollary~\ref{c:extend}. We claim that $\theta_n(D_n^k)$ represents a nonzero class in $H_k( \theta_n(X),\theta_n( X-\Gamma I_n) ; \mathbb{F}_p)$. Indeed, $BS^\downarrow _n 
\se X-\Gamma I_n$ and
it suffices to prove that $$(\theta_n)_* : H_k( S^\downarrow _n , BS^\downarrow _n ; \mathbb{F}_p) \longrightarrow H_k( \theta_n(X),\theta_n( X-\Gamma I_n) ; \mathbb{F}_p)$$ is injective. As  $\theta_n(X)$ is a $k$-dimensional complex, this reduces to showing that $\theta_n(\mathfrak{C}_a) \neq \theta_n(\mathfrak{C}_b)$ for distinct chambers $\mathfrak{C}_a, \mathfrak{C}_b \se S^\downarrow _n$. In other words, we want to show that  $\gamma \mathfrak{C}_a= \mathfrak{C}_b$ for any $\gamma \in \Gamma _n$ and any pair of chambers $\mathfrak{C}_a, \mathfrak{C}_b \se S^\downarrow _n$ implies that $\mathfrak{C}_a= \mathfrak{C}_b$. By Lemma~\ref{l:pixar}, any such $\gamma \in \Gamma _n$ fixes $y_n\in \Gamma S_n$, and by Lemma~\ref{l:ru4}, $\gamma$ is trivial so that $\mathfrak{C}_a= \mathfrak{C}_b$.

Now let $\sigma_n :S^{k-1} \rightarrow X_{k-2}$ represent $\partial D_n^k$, and let $D^k_{1,\sigma_n}$ be the $k$-disk attached to $X_{k-2}$ by $\sigma_n$ in the construction of $X_k$. Since $X$ is contractible and $k$-dimensional, and since $D_n^k$ and $\psi(D_{1,\sigma _n}^k)$
share a common boundary, they represent the same $k$-chain in the homology of $X$. Therefore, by the above paragraph, $$\Theta _n ( D_{1,\sigma_n}^k)  =[\theta _n \circ \psi(D_{1,\sigma _n}^k)] =[ \theta_n(D^k_n)] $$ is a nonzero class in $W_n \leq H_k( \theta_n(X),\theta_n( X-\Gamma I_n) ; \mathbb{F}_p)$.  
\end{proof}

\section{A sequence of cycles and cocycles for $\Gamma \backslash X_k$}
The action of  $\Gamma $ on  $W_n$ induces an action of $\Gamma $ on  the dual vector space $W_n^*$
 by $\gamma \phi (x)=\phi (\gamma^{-1} x) $ for $\gamma \in \Gamma $, $\phi \in W_n^*$, and 
 $x \in W_n$.

\begin{lemma}\label{l:asin}
For each $n$, there is  a $\Gamma $-invariant $\varphi_n \in W_n^*$
and some $\lambda _n \in {\bf G}(\mathcal{O}_S)$ and $\tau_n : S^{k-1} \rightarrow X_{k-2}$ such that $\varphi_n(\Theta_n(D_{\lambda_n,\tau_n}^k))\neq 0$.  Furthermore, after passing to a subsequence, if $m >n$ then $\varphi_m ( \Theta_m(D_{\lambda _n ,\tau_n}^k))= 0$.
\end{lemma}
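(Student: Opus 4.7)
The plan is to produce $\varphi_n$ first by a fixed-point argument, pick $(\lambda_n,\tau_n)$ where it fails to vanish, and then pass to a subsequence of the $y_n$ in order to guarantee the compatibility condition for later indices.

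For the first assertion, by Lemma~\ref{l:ducks} the space $W_n$ is a nonzero finite-dimensional $\mathbb{F}_p$-vector space, and the $\Gamma$-action on it factors through the finite $p$-group $\Gamma/\Gamma_n$. The dual $W_n^*$ is therefore a nonzero $\mathbb{F}_p$-representation of the finite $p$-group $\Gamma/\Gamma_n$; the standard orbit-counting argument (yielding $|W_n^*| \equiv |(W_n^*)^\Gamma| \pmod{p}$) produces a nonzero $\Gamma$-invariant $\varphi_n \in W_n^*$. Because the classes $\Theta_n(D_{\gamma,\sigma}^k)$ span $W_n$, at least one of them is not annihilated by $\varphi_n$, delivering the required $\lambda_n \in {\bf G}(\mathcal{O}_S)$ and $\tau_n \colon S^{k-1} \to X_{k-2}$.

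For the furthermore, I would aim for the stronger statement that $\Theta_m(D_{\lambda_n,\tau_n}^k) = 0$ whenever $m > n$ in the subsequence. Representing this class by $\theta_m \circ \psi(D_{\lambda_n,\tau_n}^k)$ in $H_k(\theta_m(X), \theta_m(X-\Gamma I_m); \mathbb{F}_p)$, vanishing follows once the compact set $\psi(D_{\lambda_n,\tau_n}^k) \subseteq X$ is disjoint from $\Gamma I_m$. To see disjointness for large $m$, I would show that the minimum of $h$ on $\Gamma I_m$ tends to infinity with $m$: by Lemma~\ref{l:yn}, $h(y_m) = \beta_P(y_m) + C^* \to \infty$; since each star $S_m$ has uniformly bounded diameter and lies in the $N^*$-neighborhood of $U\ell_Y$, and since $\beta_P$ is $1$-Lipschitz, $h$ varies on $S_m$ by at most a fixed constant; the ${\bf G}(\mathcal{O}_S)$-invariance of $h$ then transfers this lower bound to every translate $\gamma I_m$. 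Because $h$ is bounded on the compact set $\psi(D_{\lambda_n,\tau_n}^k)$, the two sets must be disjoint for all $m$ large enough, with the bound depending only on $n$.

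A diagonal extraction then finishes the argument. For each $n$, the set of indices $m$ for which $\Gamma I_m \cap \psi(D_{\lambda_n,\tau_n}^k) \neq \emptyset$ is finite by the preceding paragraph, so I can inductively select $n_1 < n_2 < \cdots$ with $n_{i+1}$ beyond every bad index associated to $n_1, \ldots, n_i$, and then relabel. The main technical point to watch is the simultaneous control of $h$ on all ${\bf G}(\mathcal{O}_S)$-translates of $I_m$ rather than on $I_m$ alone; this is precisely where the ${\bf G}(\mathcal{O}_S)$-invariance of $h$ is essential, and once that is in hand the rest is a standard compactness-plus-diagonal routine.
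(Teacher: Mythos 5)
Your proposal is correct and follows essentially the same route as the paper: the paper also obtains the nonzero $\Gamma$-invariant functional from the fact that the finite $p$-group $\Gamma/\Gamma_n$ acting on the nonzero $\mathbb{F}_p$-space $W_n^*$ has a nonzero fixed vector (quoted there via Kolchin's theorem on unipotent transformations rather than the orbit-counting congruence, an immaterial difference), and it proves the vanishing for $m>n$ by exactly your disjointness argument, phrased in terms of the $f_{BKW}$-values (whose leading coordinate is $\max h$) of the finitely many cells of $\psi(D^k_{\lambda_n,\tau_n})$ versus those of $\Gamma S_m$, followed by passing to a subsequence.
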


\begin{proof}
A linear transformation of a finite-dimensional nonzero vector space of characteristic $p$ is unipotent if and only if it has order $p^k$ for some $k$. Since the action of $\Gamma$ on $W^*_n$ factors through the $p$-group $\Gamma / \Gamma_n$, the elements of $\Gamma$ 
 act  on $W_n^*$ as unipotent transformations. By Kolchin's Theorem, any group of unipotent transformations on a finite-dimensional nonzero vector space fixes  a nonzero vector. That is, there is some $\Gamma$-invariant ${\varphi}_n \in W_n^*$ and some $k$-disk $D^k_{\lambda _n ,\tau _n}$ from the construction of $X_k$ such that ${\varphi}_n(\Theta_n(D^k_{\lambda _n ,\tau _n}  )) \neq 0$.
 
 Given the disk $D^k_{\lambda _n ,\tau _n}$ above, we may assume that the $f_{BKW}$-values of the cells in $S_{n+1}$, and hence of those in $\Gamma S_{n+1}$ exceed the $f_{BKW}$-values of the finitely many cells in $\psi( D^k_{\lambda _n ,\tau _n})$. Thus, if $m>n$ we have that $\psi( D^k_{\lambda _n ,\tau _n})\se X-\Gamma I_m$ and thus $\Theta_m( D_{\lambda _n,\tau_n}^k)=0$ in $W_m$.
    \end{proof}

\subsection{Cocycles}

Let $D^{k}_{\gamma,\sigma}$ be a $k$-cell that was attached to $X_{k-2}$ in the construction of $X_k$. Recall that $\Theta_n ( D_{\gamma,\sigma}^k)$ represents a class in $W_n$ and that $\varphi _n$ is a $\Gamma$-invariant functional on $W_n$.

\begin{lemma}\label{l:alphau7}
For any $n \geq 0$, $\gamma \in {\bf G}(\mathcal{O}_S)$, $g \in \Gamma$, and $D^{k}_{\gamma,\sigma}$, we have $\varphi_n ( \Theta _n (D^{k}_{\gamma,\sigma}))=\varphi_n ( \Theta _n (gD^{k}_{\gamma,\sigma}))$.
\end{lemma}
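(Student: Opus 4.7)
The plan is to derive this lemma by directly unwinding the definitions of the $\Gamma$-action on $W_n$ and the induced dual action on $W_n^*$. Recall that $\varphi_n$ being $\Gamma$-invariant means $g\varphi_n = \varphi_n$ as elements of $W_n^*$, which by the formula $g\phi(x) = \phi(g^{-1}x)$ is equivalent to the identity $\varphi_n(gx) = \varphi_n(x)$ for every $g \in \Gamma$ and every $x \in W_n$.

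First I would invoke the displayed chain of equalities from the subsection ``Definition of $\Theta_n(D_{\gamma,\sigma}^k)$'': for any $g \in \Gamma \leq {\bf G}(\mathcal{O}_S)$,
$$g\,\Theta_n(D_{\gamma,\sigma}^k) = [\theta_n \circ \psi(gD_{\gamma,\sigma}^k)] = \Theta_n(gD_{\gamma,\sigma}^k) = \Theta_n(D_{g\gamma,\sigma}^k),$$
where the first equality uses the $\Gamma$-equivariance of $\psi$ and $\theta_n$ on relative homology, and the last equality records the ${\bf G}(\mathcal{O}_S)$-action on the attached $k$-cells from the construction of $X_k$. Since $g\gamma \in {\bf G}(\mathcal{O}_S)$, the cell $D_{g\gamma,\sigma}^k$ is among those attached to $X_{k-2}$, so the class $\Theta_n(gD_{\gamma,\sigma}^k)$ is a legitimate element of $W_n$ to which $\varphi_n$ can be applied.

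Combining the above observations with the $\Gamma$-invariance of $\varphi_n$ yields
$$\varphi_n(\Theta_n(gD_{\gamma,\sigma}^k)) = \varphi_n(g\,\Theta_n(D_{\gamma,\sigma}^k)) = \varphi_n(\Theta_n(D_{\gamma,\sigma}^k)),$$
which is the claimed identity. I do not expect any substantive obstacle: this is a purely formal bookkeeping statement. Its role in what follows is to guarantee that the evaluations $\varphi_n \circ \Theta_n$ descend to a well-defined $\Gamma$-invariant assignment of values in $\mathbb{F}_p$ to ${\bf G}(\mathcal{O}_S)$-orbits of $k$-cells in $X_k$, so that they can be promoted to a cocycle on $\Gamma \backslash X_k$ in the subsequent section.
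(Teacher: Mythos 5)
Your argument is correct and is exactly the paper's proof, merely spelled out: the paper's one-line justification cites precisely the $\Gamma$-equivariance of $\psi$ and $\theta_n$ together with the $\Gamma$-invariance of $\varphi_n$, which is what you unwind. No further comment is needed.
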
 

\begin{proof}
This is immediate since  $\psi$ is $\Gamma$-equivariant, $\theta_n$ is $\Gamma$-equivariant, and $\varphi_n$ is $\Gamma$-invariant.
\end{proof}

Let $q : X_k \rightarrow \Gamma \backslash X_k$ be the quotient map. 
Note that any $k$-cell in $\Gamma \backslash X_k$ is contained in $\Gamma \backslash X_{k-2}$ or else is of the form $q(D^{k}_{\gamma ,\sigma})$ for some $D^{k}_{\gamma ,\sigma} \se X_k$.
We define the $k$-cochain  ${\Phi}_n$ on $k$-chains in $ \Gamma \backslash X_k$ with values in  $\mathbb{F}_p $ as $0$ on $\Gamma \backslash X_{k-2}$ and $$\Phi_n(q(D^{k}_{\gamma,\sigma}))= \varphi_n ( \Theta _n (D^{k}_{\gamma,\sigma}))$$ for any $q(D^{k}_{\gamma ,\sigma})$, and then we extend linearly. The previous lemma tells us that ${\Phi}_n$ is well-defined.

\begin{lemma}
${\Phi}_n$ is a cocycle.
\end{lemma}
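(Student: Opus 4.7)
The plan is to verify $(\delta \Phi_n)(E) = \Phi_n(\partial E) = 0$ for each $(k+1)$-cell $E$ of $\Gamma \backslash X_k$. Since $X$ is $k$-dimensional the subcomplex $X_{k-2}$ is at most $k$-dimensional, so every $(k+1)$-cell of $X_k$ is one of the attached $D^{k+1}_{\gamma,\sigma}$; I would fix a lift $\tilde E = D^{k+1}_{\gamma_0,\sigma_0}$ of $E$ and write the cellular boundary in $X_k$ as
$$\partial \tilde E \,=\, \sum_{\lambda,\tau} m_{\lambda,\tau}\, D^k_{\lambda,\tau} \,+\, c,$$
where $c$ is a $k$-chain supported on $X_{k-2}$. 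Applying $q_*$, using that $\Phi_n$ vanishes on $\Gamma \backslash X_{k-2}$, invoking Lemma~\ref{l:alphau7} to handle the possibility that $q$ identifies distinct cells $D^k_{\lambda,\tau}$ in a common $\Gamma$-orbit, and using the linearity of $\varphi_n$ reduce the cocycle identity to the single claim that
$$\sum_{\lambda,\tau} m_{\lambda,\tau}\, \Theta_n(D^k_{\lambda,\tau}) \,=\, 0 \text{ in } H_k\bigl(\theta_n(X),\,\theta_n(X - \Gamma I_n);\,\mathbb{F}_p\bigr).$$

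The key step would be to push $\tilde E$ into $X$ via the continuous map $\psi$. Since $\psi(\tilde E)$ is a singular $(k+1)$-chain in $X$ whose singular boundary is $\psi(\partial \tilde E)$, we have $\psi_*[\partial \tilde E] = 0$ in $H_k(X;\mathbb{F}_p)$, hence also in $H_k(X,\, X - \Gamma I_n;\mathbb{F}_p)$. Applying $(\theta_n)_*$ transports this vanishing to
$$\sum_{\lambda,\tau} m_{\lambda,\tau}\, \Theta_n(D^k_{\lambda,\tau}) \,+\, \bigl[\theta_n \psi(c)\bigr] \,=\, 0$$
in $H_k(\theta_n(X),\,\theta_n(X - \Gamma I_n);\,\mathbb{F}_p)$. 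But $\psi$ restricts to the inclusion on $X_{k-2}$, so $\psi(c) \subseteq X_{k-2}$; the arrangement at the end of Section~\ref{s:morse} that $S_n \cap X_{k-2} = \emptyset$, together with the $\Gamma$-invariance of $X_{k-2}$, gives $X_{k-2} \subseteq X - \Gamma I_n$. Hence $[\theta_n \psi(c)] = 0$ in the relative homology and the desired identity follows.

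The main obstacle is the bookkeeping of passing between the cellular boundary of $\tilde E$ in $X_k$ and its image as a singular $k$-cycle in $X$ under the non-cellular map $\psi$, together with the verification that $c$ genuinely lies in $X - \Gamma I_n$. Both reduce to routine facts: the continuous image of any simplicial decomposition of $\tilde E$ under $\psi$ has singular boundary equal to the image of $\partial \tilde E$, and the choice of $y_n$ combined with Lemma~\ref{l:ru4} supplies the required disjointness. The genuine content is the identification $\psi_*[\partial \tilde E] = 0$, which is precisely why the $(k+1)$-cells $D^{k+1}_{\gamma,\sigma}$ attached in Section~\ref{s:space} force $\Phi_n$ to be closed.
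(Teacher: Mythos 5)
Your argument is correct and is essentially the paper's own proof: you decompose $\partial D^{k+1}_{\gamma,\sigma}$ into the attached $k$-cells plus a chain in $X_{k-2}$, use that $\psi$ of this boundary is a $k$-cycle in the contractible $k$-dimensional $X$ (hence trivial), and discard the $X_{k-2}$ part because it lies in $X-\Gamma I_n$. The only cosmetic difference is that you phrase the vanishing at the level of relative homology classes while the paper observes the image is already the zero cellular chain; both yield $\varphi_n(\sum m_{\lambda,\tau}\Theta_n(D^k_{\lambda,\tau}))=0$.
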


\begin{proof}

The $(k+1)$ cells of $ \Gamma \backslash X_k$ are of the form $q( D^{k+1}_{\gamma,\sigma})$, so we must check that ${\Phi}_n$ evaluates the boundary of any
$q( D^{k+1}_{\gamma,\sigma})$ trivially.

Let $\mathfrak{C}_1,\ldots, \mathfrak{C}_m$ be a collection of $k$-cells in $X_{k-2}$ such that the chain $ \partial D^{k+1}_{\gamma,\sigma}$ equals $\sum_j \mathfrak{C}_j + \sum_i D^k_{\gamma _i , \sigma_i}$ for some $D^k_{\gamma_i, \sigma_i}$ where we suppress in this notation the orientation of $k$-cells. Then $\partial q(D^{k+1}_{\gamma,\sigma}) = \sum_j q( \mathfrak{C}_j) + \sum_i q(  D^k_{\gamma _i , \sigma_i})$.

Note that $\psi ( \partial D^{k+1}_{\gamma,\sigma})$ is a $k$-sphere in the $k$-dimensional and contractible $X$, and hence it represents the 0-chain. That is, the chain $\psi ( \sum_j \mathfrak{C}_j + \sum_i D^k_{\gamma _i , \sigma_i})\cap \Gamma  S_n$, and hence $\psi( \sum_i D^k_{\gamma _i , \sigma_i}) \cap \Gamma  S_n$, is the 0-chain. Therefore,  $ \Theta _n ( \sum_i D^k_{\gamma _i , \sigma_i})$ is the 0-chain, which implies
\begin{align*} {\Phi}_n\big(\partial q(D^{k+1}_{\gamma,\sigma})\big)&={\Phi}_n\Big(\sum_j q( \mathfrak{C}_j) + \sum_i q(D^k_{\gamma _i , \sigma_i})\Big) \\
& =\Phi_n\Big( \sum_i q(D^k_{\gamma _i , \sigma_i})\Big) \\
& = \varphi_n \circ \Theta _n \Big( \sum_i D^{k}_{\gamma_i,f_i}\Big) \\
& =\varphi_n ( 0 )  \\
&=0
\end{align*}
\end{proof}

\subsection{Cycles}
Given $ D_{\lambda _n,\tau_n}^k$ as in Lemma~\ref{l:asin}, the $k$-chain $ D_{\lambda _n,\tau_n}^k -    D_{\lambda _0,\tau_0}^k $ is the difference of two $k$-disks in $X_k$. We let $$C_n = q \big(  D_{\lambda _n,\tau_n}^k )- q(   D_{\lambda _0,\tau_0}^k  \big)$$ which is a $k$-chain in $\Gamma \backslash X_k$.

\begin{lemma}\label{l:qcycles}
After passing to a subsequence in $n$, each $C_n$ is a $k$-cycle over $\mathbb{F}_p$ in $\Gamma \backslash X_k$.
\end{lemma}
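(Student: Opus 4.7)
My plan is a straightforward pigeonhole argument. The main observation is that $\partial C_n$ lives in the cellular $(k-1)$-chain group of the quotient $\Gamma \backslash X_{k-2}$, which has only finitely many cycles over $\mathbb{F}_p$, so after passing to a subsequence one can force all the boundaries $\partial q(D^k_{\lambda_n, \tau_n})$ to be literally equal.

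First I would unpack the cellular boundary of the attached cell. By construction the attaching map of $D^k_{\lambda_n, \tau_n}$ is $\lambda_n \circ \tau_n : S^{k-1} \to X_{k-2}$, so its cellular boundary is the pushforward $(\lambda_n \circ \tau_n)_*[S^{k-1}]$ of the fundamental class. Since $\partial [S^{k-1}] = 0$, this is automatically a cellular $(k-1)$-cycle in $X_{k-2}$, and therefore $\partial q(D^k_{\lambda_n, \tau_n}) = q(\partial D^k_{\lambda_n, \tau_n})$ is a $(k-1)$-cycle in $\Gamma \backslash X_{k-2}$ with coefficients in $\mathbb{F}_p$.

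Next I would invoke the cocompactness of the ${\bf G}(\mathcal{O}_S)$-action on $X_{k-2}$ (Section~\ref{s:morse}) together with the fact that $\Gamma$ has finite index in ${\bf G}(\mathcal{O}_S)$: $\Gamma$ then has only finitely many orbits of $(k-1)$-cells on $X_{k-2}$, so the cellular $(k-1)$-chain group of $\Gamma \backslash X_{k-2}$ over $\mathbb{F}_p$ is a finite-dimensional $\mathbb{F}_p$-vector space, hence literally a finite set, and the subset of cycles is therefore finite as well. The sequence $\{\partial q(D^k_{\lambda_n, \tau_n})\}_{n \geq 0}$ lies in this finite set, so by pigeonhole some common value is attained by infinitely many indices. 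I would then pass to that subsequence, preserving order, and relabel indices from $n=0$. The conclusions of Lemma~\ref{l:asin} are preserved under any order-preserving subsequence, since they concern only inequalities of the form $m > n$. After the relabeling, $\partial q(D^k_{\lambda_n, \tau_n}) = \partial q(D^k_{\lambda_0, \tau_0})$ for every $n$, so $\partial C_n = 0$ in $\Gamma \backslash X_k$, and each $C_n$ is a $k$-cycle over $\mathbb{F}_p$ in $\Gamma \backslash X_k$.

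I do not anticipate any serious obstacle; the only point requiring mild care is that $\Gamma$ need not act freely on $X_{k-2}$, but cocompactness of the action already suffices for the pigeonhole step because it ensures finiteness of the cellular chain group in each dimension.
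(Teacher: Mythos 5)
Your argument is correct and is essentially the same as the paper's: both observe that $\partial q(D^k_{\lambda_n,\tau_n})$ is a $(k-1)$-cycle in the compact complex $\Gamma\backslash X_{k-2}$, note that there are only finitely many cellular $(k-1)$-chains over $\mathbb{F}_p$ there, and pigeonhole to make the boundaries constant along a subsequence. Your added remarks (that the subsequence can be taken order-preserving so Lemma~\ref{l:asin} survives, and that freeness of the action is not needed) are correct but routine.
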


\begin{proof}
Notice that $q(\partial D_{\gamma _n,\sigma_n}^k)$ is a $(k-1)$-cycle in $\Gamma \backslash X_{k-2}$. Since $\Gamma \backslash X_{k-2}$ is compact, there are only finitely many cellular $(k-1)$-chains in $\Gamma \backslash X_{k-2}$ with coefficients in $\mathbb{F}_p$. Therefore, we may pass to a subsequence and assume that $q(\partial  D_{\lambda _n,\tau_n}^k)$ is a constant $\mathbb{F}_p$-cycle for $n \geq 0$.
\end{proof}

We can now prove

\begin{proposition}\label{p:mp}
$H^k(\Gamma \backslash X_k ; \mathbb{F}_p)$ and $H_k(\Gamma \backslash X_k ; \mathbb{F}_p)$ are infinite.
\end{proposition}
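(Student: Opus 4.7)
The plan is to show that the cocycles $\Phi_n$ and cycles $C_n$ (for $n\ge 1$) constructed in the preceding subsections pair against one another via a triangular matrix with nonzero diagonal, yielding infinitely many linearly independent classes in both $H^k(\Gamma\backslash X_k;\mathbb{F}_p)$ and $H_k(\Gamma\backslash X_k;\mathbb{F}_p)$ simultaneously.

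First I will unpack the evaluation. By the definition of $\Phi_m$ (which is $0$ on $\Gamma\backslash X_{k-2}$ and equals $\varphi_m\circ\Theta_m$ on each attached $k$-cell) and the definition of $C_n$,
$$\Phi_m(C_n) \;=\; \varphi_m\bigl(\Theta_m(D^k_{\lambda_n,\tau_n})\bigr) \;-\; \varphi_m\bigl(\Theta_m(D^k_{\lambda_0,\tau_0})\bigr).$$
Lemma~\ref{l:asin} controls both terms. For the first term, its vanishing clause gives $\varphi_m(\Theta_m(D^k_{\lambda_n,\tau_n}))=0$ whenever $m>n$; for the second term, reading the lemma with subscripts $(m,0)$ gives $\varphi_m(\Theta_m(D^k_{\lambda_0,\tau_0}))=0$ whenever $m>0$. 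Combining these, $\Phi_m(C_n)=0$ for all $m>n\ge 1$. When $m=n\ge 1$, the second term still vanishes (since $n>0$) while the first is nonzero by the Kolchin-fixed-vector clause of Lemma~\ref{l:asin}; hence $\Phi_n(C_n)\ne 0$.

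Second, I will turn this into linear algebra. Fix $N\ge 1$ and consider the $N\times N$ matrix $A=(\Phi_m(C_n))_{1\le m,n\le N}$ over $\mathbb{F}_p$. By the previous step $A$ is upper triangular (with $m$ indexing rows) and its diagonal entries are nonzero, so $A$ is invertible. If $\sum_{n=1}^N a_n[C_n]=0$ in $H_k(\Gamma\backslash X_k;\mathbb{F}_p)$, then evaluation by each well-defined cocycle $\Phi_m$ gives $\sum_n a_n\Phi_m(C_n)=0$ for every $m\in\{1,\dots,N\}$, forcing $a_n=0$; so the $[C_n]$ are linearly independent, and by Lemma~\ref{l:qcycles} they are genuine homology classes. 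Dually, if $\sum_{m=1}^N b_m[\Phi_m]=0$ in $H^k(\Gamma\backslash X_k;\mathbb{F}_p)$, then evaluation on each cycle $C_n$ gives $\sum_m b_m\Phi_m(C_n)=0$ for every $n\in\{1,\dots,N\}$, forcing $b_m=0$. Letting $N\to\infty$ shows that both $H^k$ and $H_k$ contain infinitely many linearly independent classes over $\mathbb{F}_p$, which is the desired conclusion.

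I do not foresee a substantive obstacle: essentially all of the geometric and representation-theoretic content has already been absorbed into Lemma~\ref{l:asin} (Kolchin's theorem applied to the $p$-group action of $\Gamma/\Gamma_n$ on $W_n^*$, combined with the Morse-height argument that kills contributions of earlier disks at later levels) and Lemma~\ref{l:qcycles} (which ensures each $C_n$ is a cycle after passing to a subsequence). The only delicate bookkeeping is verifying the vanishing of the anchor term $\varphi_m(\Theta_m(D^k_{\lambda_0,\tau_0}))$ for $m\ge 1$, which is precisely why the linearly independent subsequences are indexed starting at $n=1$ rather than $n=0$.
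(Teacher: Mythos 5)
Your proof is correct and follows essentially the same route as the paper: both rest on the triangularity $\Phi_m(C_n)=\varphi_m(\Theta_m(D^k_{\lambda_n,\tau_n}))$ for $m\ge n>0$, nonzero on the diagonal and zero above it, via Lemma~\ref{l:asin}. The only (harmless) difference is that you extract full linear independence of the classes, whereas the paper settles for pairwise distinctness, which already gives infiniteness.
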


\begin{proof}
Let $m \geq n > 0$. By the definitions of $\Phi_n$ and $C_n$, and by Lemma~\ref{l:asin}, 
\begin{align*}\Phi_m(C_n)
&=\Phi_m \big(q(  D_{\lambda _n,\tau_n}^k ) \big)- \Phi_m \big( q(   D_{\lambda _0,\tau_0}^k  ) \big) \\ 
&=\varphi_m \big(\Theta_m (  D_{\lambda _n,\tau_n}^k ) \big)- \varphi_m  \big(\Theta_m(   D_{\lambda _0,\tau_0}^k  ) \big) \\  
&=\varphi_m(\Theta_m \big(  D_{\lambda _n,\tau_n}^k ) \big)
\end{align*} 
does not equal 0 if $m=n$, but does equal 0 if  $m>n$. Thus, each of the terms in the sequences $[\Phi_n]\in H^k(\Gamma \backslash X_k ; \mathbb{F}_p)$ and $[C_n]\in H_k(\Gamma \backslash X_k ; \mathbb{F}_p)$ are distinct.
\end{proof}

\section{Proof of Theorem~\ref{t:mt}}\label{s:end}

If $\Gamma$ acts freely on $X_k$, then Theorem~\ref{t:mt} is immediate from Proposition~\ref{p:mp}. And one can always choose a finite-index, residually $p$-finite subgroup of ${\bf G}(\mathcal{O}_S)$ that acts freely on $X_k$ (see the following section). However, to show Theorem~\ref{t:mt} holds for any, and not just some, finite-index, residually $p$-finite subgroup of ${\bf G}(\mathcal{O}_S)$, we need to apply one more technique. That is the goal of this section.

By our construction of $X_k$, the group $\Gamma$ acts freely on $X_k -X_{k-2}$, and while it may not be true that $\Gamma$ acts freely on $X_{k-2}$, it does act cocompactly on  $X_{k-2}$. That is, there are only finitely many $k$-cells in the quotient  $\Gamma \backslash  X_{k-2}$.  This will imply Theorem~\ref{t:mt} after the application of a spectral sequence.

The material from this section is taken from Chapter VII of Brown's text on Cohomology of Groups \cite{Brown}.

We begin by subdividing $X_k$ such that individual cells in $X_k$ inject into $\Gamma \backslash X_k$.

We let $H^\Gamma _k (X_k ; \mathbb{F}_p)$ be the $k$-th equivariant homology group of $\Gamma$ and $X_k$ with coefficients in $\mathbb{F}_p$. That is, if $C_*(X_k;\mathbb{F}_p)$ is the chain complex for the homology of $X_k$ with coefficients in $\mathbb{F}_p$, and if $F_*$ is a projective resolution of $\mathbb{Z}$ over $\mathbb{Z}\Gamma$, then 
$$H^\Gamma _k (X_k ; \mathbb{F}_p)=H_k(F_* \otimes _\Gamma C_*(X_k;\mathbb{F}_p) ) $$

\begin{lemma}
$H^\Gamma _k (X_k ; \mathbb{F}_p)=H _k (\Gamma ; \mathbb{F}_p)$
\end{lemma}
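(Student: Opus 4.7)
The plan is to invoke the standard hyperhomology spectral sequence converging to equivariant homology, as developed in Chapter VII of Brown \cite{Brown}. For any $\Gamma$-CW complex $Y$, one has
$$E^2_{pq} = H_p(\Gamma; H_q(Y; \mathbb{F}_p)) \Longrightarrow H^\Gamma_{p+q}(Y; \mathbb{F}_p).$$
I would apply this with $Y = X_k$ and read off the group on the diagonal $p+q=k$, using the $k$-connectedness of $X_k$ to kill every entry except $(p,q)=(k,0)$.

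Since $X_k$ is $k$-connected, the Hurewicz theorem (together with the universal coefficient theorem) gives $H_q(X_k; \mathbb{F}_p) = 0$ for $1 \leq q \leq k$, while $H_0(X_k; \mathbb{F}_p) = \mathbb{F}_p$ with trivial $\Gamma$-action since $X_k$ is connected. So along the anti-diagonal $p+q=k$ the only possibly nonzero $E^2$-entry is
$$E^2_{k,0} = H_k(\Gamma; H_0(X_k; \mathbb{F}_p)) = H_k(\Gamma; \mathbb{F}_p),$$
while every other $E^2_{pq}$ on this diagonal has $1 \leq q \leq k$ and is therefore zero.

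Next I would verify that no differential interacts with $E^\bullet_{k,0}$. The outgoing differential $d_r : E^r_{k,0} \to E^r_{k-r,\,r-1}$ lands in a column with $q = r-1$. For $2 \leq r \leq k+1$ this column satisfies $1 \leq q \leq k$, and the target already vanishes at $E^2$; for $r > k+1$ the target has $p = k-r < 0$ and so is zero. The incoming differential $d_r : E^r_{k+r,\,1-r} \to E^r_{k,0}$ originates in a column with $q = 1-r \leq -1$, hence is zero. Therefore $E^\infty_{k,0} = E^2_{k,0} = H_k(\Gamma; \mathbb{F}_p)$, and since this is the only nontrivial associated graded piece of $H^\Gamma_k(X_k;\mathbb{F}_p)$ on the $k$-th diagonal, the equality $H^\Gamma_k(X_k; \mathbb{F}_p) = H_k(\Gamma; \mathbb{F}_p)$ follows.

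The proof is purely algebraic and there is no real obstacle beyond correctly tracking the bidegrees of the differentials; the only input used is the $k$-connectedness of $X_k$, which was built into the construction in Section~\ref{s:space}.
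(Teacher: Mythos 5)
Your argument is correct and is essentially the paper's own proof: both use the equivariant homology spectral sequence $E^2_{\ell,q}=H_\ell(\Gamma;H_q(X_k;\mathbb{F}_p))$ from Brown Chapter VII and the $k$-connectedness of $X_k$ to collapse everything on the diagonal to $E^2_{k,0}=H_k(\Gamma;\mathbb{F}_p)$. Your explicit tracking of the bidegrees of the differentials is a slightly more detailed version of the paper's remark that $E^r_{\ell,q}=0$ for $r\geq 2$ and $0<q\leq k$, but the substance is identical.
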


\begin{proof}
The complex $F_* \otimes _\Gamma C_*(X_k;\mathbb{F}_p)$ is a double complex with  an associated spectral sequence  $$E^1_{\ell,q}=H_q(F_\ell \otimes _\Gamma C_*(X_k;\mathbb{F}_p))=F_\ell \otimes _\Gamma H_q(X_k;\mathbb{F}_p)$$
and  
$$E^2_{\ell,q}=H_\ell(\Gamma;H_q(X_k;\mathbb{F}_p))$$

Notice that if $0<q\leq k$ then $E^2_{\ell,q}=H_\ell(\Gamma ; 0)=0$ since $X_k$ is $k$-connected. It follows that $E^r_{\ell,q}=0$ when $r \geq 2$ and $0<q\leq k$. Hence, $$H_k(\Gamma;\mathbb{F}_p)=E^2_{k,0}=E^\infty_{k,0} = \bigoplus _{\ell+q=k}E^\infty_{\ell,q} $$ The lemma follows since the spectral sequence converges to $H^\Gamma _* (X_k ; \mathbb{F}_p)$.
\end{proof}

The complex $F_* \otimes _\Gamma C_*(X_k;\mathbb{F}_p)$ is also a double complex with an associated spectral sequence where $E^1_{\ell,q}=H_q(F_* \otimes _\Gamma C_\ell(X_k;\mathbb{F}_p))$.  The spectral sequence converges to $H^\Gamma _* (X_k ; \mathbb{F}_p)$, and in particular, $$H_k(\Gamma  ; \mathbb{F}_p)=H^\Gamma _k (X_k ; \mathbb{F}_p)= \bigoplus _{\ell+q=k}E^\infty_{\ell,q}$$

As in VII.7.7 of \cite{Brown}, $$E^1_{\ell,q} = \bigoplus _{c \in Y_\ell}H_q(\Gamma _c ; \mathbb{F}_p)$$ where $Y_\ell$ is a set of representatives of $\ell$-cells in $X_k$ modulo $\Gamma$, and $\Gamma _c$ is the stabilizer in $\Gamma$ of $c$.

\begin{lemma}
If $r,q \geq 1$, then $E^r_{\ell,q}$ is finite.
\end{lemma}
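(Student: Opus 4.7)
The plan is to verify that each summand contributing to $E^1_{\ell,q}$ for $q\geq 1$ is finite and that only finitely many summands are nontrivial; the result for $r\geq 2$ then follows because $E^{r+1}_{\ell,q}$ is a subquotient of $E^r_{\ell,q}$.

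First I would separate the set $Y_\ell$ of $\Gamma$-orbit representatives of $\ell$-cells in $X_k$ into two parts: those representatives lying in $X_{k-2}$ and those lying in $X_k - X_{k-2}$. For any cell $c$ with $c \subseteq X_k - X_{k-2}$, the construction of $X_k$ in Section~\ref{s:space} guarantees that the $\Gamma$-action is free on $c$, so $\Gamma_c$ is trivial and $H_q(\Gamma_c;\mathbb{F}_p)=0$ for $q\geq 1$. Thus these orbits contribute nothing to $E^1_{\ell,q}$ in the range $q\geq 1$.

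Next I would handle the cells in $X_{k-2}$. Since $\Gamma$ acts cocompactly on $X_{k-2}$, there are only finitely many $\Gamma$-orbits of $\ell$-cells contained in $X_{k-2}$, so the remaining sum defining $E^1_{\ell,q}$ is finite. For each such representative $c$, the stabilizer $\Gamma_c$ is a subgroup of the cell stabilizer of $c$ in $\mathbf{G}(\mathcal{O}_S)$, which is finite because $\mathbf{G}(\mathcal{O}_S)$ is a discrete group acting on the locally finite Euclidean building $X$. For a finite group $\Gamma_c$, the homology $H_q(\Gamma_c;\mathbb{F}_p)$ is a finite-dimensional $\mathbb{F}_p$-vector space in every degree (compute from the bar resolution, or from any resolution of $\mathbb{Z}$ by finitely generated $\mathbb{Z}\Gamma_c$-modules, noting that $\mathbb{Z}\Gamma_c$ is a free abelian group of finite rank). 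Hence each summand $H_q(\Gamma_c;\mathbb{F}_p)$ is finite.

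Combining these two observations, $E^1_{\ell,q}$ is a finite direct sum of finite groups, and is therefore finite for every $q\geq 1$. Since each subsequent page $E^{r+1}_{\ell,q}$ is obtained as the homology of a differential on $E^r_{\ell,q}$ and is thus a subquotient of $E^r_{\ell,q}$, finiteness propagates: $E^r_{\ell,q}$ is finite for every $r\geq 1$ and every $q\geq 1$. There is no real obstacle here beyond carefully distinguishing the free part of the action from the cocompact non-free part; the essential inputs -- cocompactness of $\Gamma$ on $X_{k-2}$, freeness of $\Gamma$ on $X_k - X_{k-2}$, and finiteness of cell stabilizers -- have all been established earlier.
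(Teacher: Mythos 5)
Your proof is correct and follows essentially the same route as the paper: both arguments use freeness of the $\Gamma$-action off $X_{k-2}$ to kill all but finitely many summands of $E^1_{\ell,q}$ for $q\geq 1$, cocompactness on $X_{k-2}$ plus finiteness of cell stabilizers to make the remaining summands finite, and the subquotient property of later pages. The extra details you supply (why stabilizers are finite, why $H_q$ of a finite group with $\mathbb{F}_p$-coefficients is finite) are accurate and only elaborate on what the paper leaves implicit.
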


\begin{proof}
Since $\Gamma$ acts cocompactly on $X_{k-2}$ and freely on $X_k - X_{k-2}$, there are only finitely many $c \in Y_\ell$ such that $\Gamma _c \neq 1$. Thus, 
$E^1_{\ell,q}$ is finite as it is a finite sum of homology groups of finite groups with coefficients in a finite field. The lemma follows since the dimension of
$E^r_{\ell,q}$ is bounded by that of 
$E^1_{\ell,q}$.
\end{proof}

\begin{lemma}
$E^2_{\ell,0}=H_\ell(\Gamma \backslash X_k ; \mathbb{F}_p)$. In particular, by Proposition~\ref{p:mp}, $E^2_{k,0}$ is infinite.
\end{lemma}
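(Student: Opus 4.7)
The plan is to unwind the definition of $E^1_{\ell,0}$ for this second spectral sequence of the double complex $F_*\otimes_\Gamma C_*(X_k;\mathbb{F}_p)$. By definition,
$$E^1_{\ell,q}=H_q\bigl(F_*\otimes_\Gamma C_\ell(X_k;\mathbb{F}_p)\bigr)=H_q\bigl(\Gamma;C_\ell(X_k;\mathbb{F}_p)\bigr).$$
For $q=0$ this is the module of coinvariants $C_\ell(X_k;\mathbb{F}_p)_\Gamma$. So the first step is to identify this module of coinvariants with $C_\ell(\Gamma\backslash X_k;\mathbb{F}_p)$, and then to identify the $d^1$ differential with the cellular boundary on the quotient.

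For the first identification, I would use the subdivision hypothesis stated at the start of the section: after subdivision, any cell of $X_k$ injects into $\Gamma\backslash X_k$, which is equivalent to saying that any $\gamma\in\Gamma$ that setwise stabilizes a cell $c$ fixes $c$ pointwise. Consequently the $\Gamma$-action on the $\mathbb{F}_p$-basis of oriented $\ell$-cells of $X_k$ is by permutations (no orientation flips), and stabilizers act trivially on the $\mathbb{F}_p$ coefficients. Taking coinvariants of such a permutation module collapses each orbit to a single basis element, yielding exactly the free $\mathbb{F}_p$-module on the $\ell$-cells of $\Gamma\backslash X_k$, i.e.\ $C_\ell(\Gamma\backslash X_k;\mathbb{F}_p)$.

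The second step is routine: the $d^1$ differential $E^1_{\ell,0}\to E^1_{\ell-1,0}$ is induced by the cellular boundary $\partial:C_\ell(X_k;\mathbb{F}_p)\to C_{\ell-1}(X_k;\mathbb{F}_p)$, which is $\Gamma$-equivariant and therefore descends to the coinvariants as the usual cellular boundary on $\Gamma\backslash X_k$. Hence
$$E^2_{\ell,0}=H_\ell\bigl(C_*(\Gamma\backslash X_k;\mathbb{F}_p)\bigr)=H_\ell(\Gamma\backslash X_k;\mathbb{F}_p).$$
The ``in particular'' assertion that $E^2_{k,0}$ is infinite is then immediate from Proposition~\ref{p:mp}.

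The only real point requiring care is the orientation/stabilizer issue in the coinvariants-to-quotient-chains identification; everything else is a mechanical consequence of the definition of the spectral sequence of a double complex. This is precisely why the author took the trouble to subdivide $X_k$ in the opening sentence of the section: without that subdivision, a cell-stabilizer could reverse orientation and contribute a factor of $2$ (harmless in characteristic $\neq 2$) or otherwise interfere with the clean identification.
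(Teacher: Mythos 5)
Your proposal is correct, and it arrives at the same identification as the paper, but by a slightly more direct route. You compute $E^1_{\ell,0}=H_0(\Gamma;C_\ell(X_k;\mathbb{F}_p))$ as the coinvariants $C_\ell(X_k;\mathbb{F}_p)_\Gamma$, identify these with $C_\ell(\Gamma\backslash X_k;\mathbb{F}_p)$ using the admissibility of the (subdivided) action, and observe that the $\Gamma$-equivariant boundary descends to the cellular boundary on the quotient. The paper instead works with the isotropy decomposition $E^1_{\ell,q}=\bigoplus_{c\in Y_\ell}H_q(\Gamma_c;\mathbb{F}_p)$ of Brown VII.7.7 and the explicit formula VII.8.1 for $d^1$ as a sum $v_d\circ u_{cd}\circ t_c$ of transfer, conjugation, and induced maps, checking that transfer and conjugation are identities on $H_0$ so that only $\pi_d\circ\partial'|_c$ survives. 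For $q=0$ the two descriptions of the $E^1$ page coincide, so the arguments are equivalent; yours is more elementary here, while the paper's choice keeps the bookkeeping uniform with the adjacent lemma on finiteness of $E^r_{\ell,q}$ for $q\geq 1$, which genuinely needs the $\bigoplus_c H_q(\Gamma_c;\mathbb{F}_p)$ form. You are also right to flag the orientation/stabilizer issue as the one point of substance: without the subdivision, a stabilizer reversing the orientation of a cell would kill that summand of the coinvariants in odd characteristic, and the identification with $C_\ell(\Gamma\backslash X_k;\mathbb{F}_p)$ would fail; the paper handles this implicitly through the same subdivision.
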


\begin{proof}
Let $\partial '$ be the boundary operator for $C_*(X_k;\mathbb{F}_p)$, and for any $(\ell-1)$-cell $d \se X_k$, let $\pi _d$ be the projection of $C_{\ell -1}(X_k;\mathbb{F}_p)$ onto the coordinate represented by $d$.

We let $\partial$ be the boundary operator for the chain complex of $\Gamma \backslash X_{k}$, denoted as $C_*(\Gamma \backslash X_k;\mathbb{F}_p)$.

Notice that $E^2_{*,0}$ is the homology of the complex $(E^1_{k,0}, d^1)$ where $d^1: E^1_{\ell,0} \rightarrow E^1_{\ell-1,0}$. There is a natural identification of $$E^1_{\ell,0}= \bigoplus _{c \in Y_\ell}H_0(\Gamma _c ; \mathbb{F}_p)=\bigoplus _{c \in Y_\ell}\mathbb{F}_p$$ with $$C_\ell (\Gamma \backslash X_k;\mathbb{F}_p)$$ given by $$(a_c)_{c\in Y_\ell} \mapsto \sum_{\Gamma c \se \Gamma \backslash X_k} a_c (\Gamma c)$$ where $a_c \in \mathbb{F}_p$. Below we apply this identification liberally.

Our goal is to show that $d^1$ can be identified with $\partial$. For this, if $c \in Y_\ell$ then we let $\mathcal{D}_c$ be the set of $(\ell -1)$-cells in $X_k$ contained in $c$. Then VII.8.1 of \cite{Brown} tells us that if $a_c \in \mathbb{F}_p = H_0 (\Gamma _c ; \mathbb{F}_p)$ then, up to sign,
$$d^1(a_c)=\sum _{d \in \mathcal{D}_c}v_d \circ u_{cd} \circ t_c (a_c) $$
where $t_c :H_0 (\Gamma _c ; \mathbb{F}_p) \rightarrow H_0(\Gamma _c ; \mathbb{F}_p)$ is transfer --- and thus is the identity --- and where $v_d : H_0 (\Gamma _d ; \mathbb{F}_p) \rightarrow  H_0 (\Gamma _{d_0} ; \mathbb{F}_p)$ for $d_0 \in Y_{\ell -1}$ is such that $\Gamma d = \Gamma d_0$ and $v_d$ is induced by conjugation in $\Gamma$ --- and thus is the identity --- and where $u_{cd}: H_0(\Gamma _c ; \mathbb{F}_p) \rightarrow H_0(\Gamma _d ; \mathbb{F}_p)$ is induced by $\Gamma _c \hookrightarrow \Gamma _d$ and $\pi _d \circ \partial '| _c$ --- and thus is identified with $$\pi _d \circ \partial ' |_c: \{\,a_cc\mid a_c \in \mathbb{F}_p \,\} \rightarrow  \{\,a_dd\mid a_d \in \mathbb{F}_p \,\}$$

Therefore, 
\begin{align*}
d^1(a_c)&= \sum _{d \in \mathcal{D}_c}  u_{cd}  (a_c) \\
&=\sum _{d \in \mathcal{D}_c}\pi _d \circ \partial '  (a_c) \\
& = \partial (a_c(\Gamma c))
\end{align*}
\end{proof}

\subsection{Proof of Theorem~\ref{t:mt}}
By the two preceding lemmas, we have for each $r \geq 2$ that the kernel of $d^r :
E^r_{k,0}
\rightarrow
E^r_{k-r,r-1}$ is infinite, which implies the infiniteness of 
$$E^\infty_{k,0} \leq
 \bigoplus _{\ell + q =k} E^\infty_{\ell,q}= H_k(\Gamma; \mathbb{F}_p)\cong H^k(\Gamma; \mathbb{F}_p)$$

\section{Existence of finite-index, residually $p$-finite subgroups of ${\bf G}(\mathcal{O}_S)$}\label{s:frpf}

In this section we give a sketch of the well-known existence statement from the title of this section. The existence essentially follows from Platonov's Theorem on finitely-generated matrix groups. We took our account below from Nica \cite{N}.

Let $w$ be a valuation of $K$ that is not contained in $S$, and let $\mathfrak{m}\se \mathcal{O}_S$ be the ideal $\{\, x \in \mathcal{O}_S \mid |x|_w<1 \,\}.$ Note that $\cap_k \mathfrak{m}^k=0$. Furthermore, $\mathcal{O}_S / \mathfrak{m}$ is identified with the values of elements of $\mathcal{O}_S$ at $w$, and hence is finite. Similarly, $\mathfrak{m}^k / \mathfrak{m}^{k+1}$ is finite for any $k \geq 1$, so that $\mathcal{O}_S / \mathfrak{m}^k$ is a finite ring.

For $k \geq 1$, let $\Lambda _k$ be the kernel of $$\alpha _k : {\bf GL_n}(\mathcal{O}_S) \rightarrow {\bf GL_n}(\mathcal{O}_S/\mathfrak{m}^k) $$ Since $\mathcal{O}_S / \mathfrak{m}^k$ is a finite ring, $\Lambda _k$ is a finite-index normal subgroup of ${\bf GL_n}(\mathcal{O}_S) $. Also note that if $m>k$ then $\Lambda _{m}$ is a normal subgroup of $\Lambda _k$ since $\Lambda _{m}$ is the kernel of $\alpha _m$ restricted to $\Lambda _k$.
 
 We claim that $\Lambda _k / \Lambda _{k+1}$ is a $p$-group. Indeed, if $g \in \Lambda _k$ then the matrix entries of $g-1$ are contained in $\mathfrak{m}^k$. Thus, the matrix entries of $(g-1)^p$ are contained  in $\mathfrak{m}^{k+1}$. Since $\mathcal{O}_S \se K$ has characteristic $p$, $g^p-1=(g-1)^p$ so that $g^p \in \Lambda _{k+1}$, establishing our claim.
 
Note that $\cap_k \mathfrak{m}^k=0$ implies $\cap_k \Lambda _k=1$. Thus, if $Z \se \Lambda_1$ is finite we can choose $k \gg 0$ such that $Z \cap \Lambda_k \se \{1\}$, and 
$$[\Lambda _1: \Lambda _k]=\prod_{i=1}^{k-1}[\Lambda _i:\Lambda _{i+1}]$$
is a power of $p$. Therefore, $\Lambda _1$ is a finite-index, residually $p$-finite subgroup of ${\bf GL_n}(\mathcal{O}_S)$.

For general ${\bf G}(\mathcal{O}_S)$ we have an embedding of $K$-groups ${\bf G} \leq {\bf GL_n}$ and we replace $\Lambda _k$ in the above with $\Lambda _k \cap {\bf G}(\mathcal{O}_S)$.

\end{document}